\documentclass{amsart}
\usepackage[english]{babel}
\usepackage[latin1]{inputenc}
 \usepackage[all]{xy}

\usepackage{amsmath,amsfonts,amssymb,amsthm,amscd,array,stmaryrd,mathrsfs, mathdots}
\usepackage{pstricks}

\setlength{\textwidth}{16truecm}
\setlength{\hoffset}{-1.5truecm}


\theoremstyle{plain}
\newtheorem{thm}{Theorem}

\newtheorem{lem}{Lemma}[section]

\newtheorem{prop}[lem]{Proposition}
\newtheorem{conj}[lem]{Conjecture}

\theoremstyle{definition}
\newtheorem{defn}[lem]{Definition}
\newtheorem{rem}[lem]{Remark}
\newtheorem{ex}[lem]{Example}

\let\ssection=\section
\renewcommand{\section}{\setcounter{equation}{0}\ssection}



\newcommand{\R}{\mathbb{R}}
\newcommand{\Z}{\mathbb{Z}}
\newcommand{\C}{\mathbb{C}}

\newcommand{\Q}{\mathbb{Q}}


\newcommand{\A}{\mathcal{A}}
\newcommand{\B}{\mathcal{B}}
\newcommand{\cC}{\mathcal{C}}

\newcommand{\Pc}{\mathcal{P}} 
 
\newcommand{\Rc}{\mathcal{R}}
\newcommand{\Sc}{\mathcal{S}}



\newcommand{\Tr}{\textup{Tr\,}}

\newcommand{\Id}{\mathrm{Id}}
\newcommand{\GL}{\mathrm{GL}}
\newcommand{\PG}{\mathrm{PSL}_{q}(2,\Z)}
\newcommand{\PGL}{\mathrm{PGL}}
\newcommand{\SL}{\mathrm{SL}}
\newcommand{\PSL}{\mathrm{PSL}}

\newcommand{\U}{\mathrm{U}}


\def\c{\gamma}


\begin{document}

\title[$q$-deformations of the modular group and of quadratic irrationals]{$q$-deformations of the modular group and of the real quadratic irrational numbers}

\author[L. Leclere, S. Morier-Genoud]{Ludivine Leclere and Sophie Morier-Genoud}

\address{Sophie Morier-Genoud,
Sorbonne Universit\'e, Universit\'e de Paris, CNRS,
Institut de Math\'ematiques de Jussieu-Paris Rive Gauche,
 F-75005, Paris, France
}

\address{
Ludivine Leclere,
Sorbonne Universit\'e, Universit\'e de Paris, CNRS,
Institut de Math\'ematiques de Jussieu-Paris Rive Gauche,
 F-75005, Paris, 
 France}
\email{sophie.morier-genoud@imj-prg.fr, ludivine.leclere1@gmail.com}

\keywords{$q$-analogues, quadratic irrationals, continued fractions, modular group, palindromic polynomials, unimodality, continuants}

\maketitle

\begin{abstract}
We develop further the theory of $q$-deformations of real numbers introduced in \cite{MGOfmsigma} and \cite{MGOexp} and focus in particular on the class of real quadratic irrationals. Our key tool is  a $q$-deformation of the modular group $\PG$. The action of the modular group by M\"obius transformations commutes with the $q$-deformations. We prove that the traces of the elements of $\PG$ are palindromic polynomials with positive coefficients. These traces appear in the explicit expressions of the $q$-deformed quadratic irrationals. 
\end{abstract}

\thispagestyle{empty}
\tableofcontents
\section{Introduction}
A $q$-deformation of rational and irrational numbers was introduced in \cite{MGOfmsigma} and \cite{MGOexp}. The approach is based on combinatorial properties of the rational numbers. The subject can be linked to classical topics such as the Markov-Hurwitz approximation theory \cite{LMGOV}, see aslo \cite{GiVa}, \cite{Kog}, combinatorics of posets \cite{McCSS}, knots invariants \cite{KoWa}.

The $q$-deformation of a rational number is a rational function in the parameter $q$ with integer coefficients. It can be obtained using a $q$-deformation of the classical recursive procedure generating the rationals via Farey sums. Alternatively, it can be computed using explicit formulas involving a $q$-deformation of the continued fraction expansion of the number. 
It is sometimes convenient to consider the $q$-rational number as a formal power series using the Taylor expansion of the rational function.

 It was proved in \cite{MGOexp} that if $(x_{n})_{n}$ is an arbitrary sequence of rational numbers converging to an irrational number $x$, then the sequence of the power series of the $q$-rationals $([x_{n}]_{q})_{n}$ converges to a power series depending only on $x$ and this allows to define the limit as $[x]_{q}$ the $q$-deformation of $x$.

Many natural questions arise. Among them we focus on the following:

(1) How do algebraic operations on the real numbers behave under $q$-deformations?

(2) Do the $q$-deformations of algebraic numbers satisfy algebraic equations?

Regarding question (1) our main result is
\begin{thm}\label{relx} For all $x\in \R$, the $q$-deformations satisfy
\begin{eqnarray}
[x+1]_{q}&=&q[x]_{q}+1,\\\label{relx1}
\left[-\frac{1}{x}\right]_{q}&=&-q^{-1}\frac{1}{[x]_{q}}.\label{relx2}
\end{eqnarray}
\end{thm}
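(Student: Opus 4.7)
The strategy is to prove the two identities first for rational $x$ by a direct calculation using the $q$-deformed continued fraction formula, and then extend them to all $x\in\R$ via the convergence theorem of \cite{MGOexp}.

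For rational $x=r/s$, recall that $[x]_q$ is computed from the continued fraction expansion $x=[a_1,a_2,\ldots,a_n]$ by a $q$-analogue of the usual nested-fraction formula. The operations $x\mapsto x+1$ and $x\mapsto -1/x$ have elementary effects on continued fractions: the first simply increments the leading partial quotient, while the second is the familiar continued-fraction inversion (up to a sign and reindexing). Substituting these into the $q$-continued fraction recipe, together with the basic identity $[n+1]_q=q[n]_q+1$ for $q$-integers, the two claimed formulas reduce to finite algebraic manipulations of rational functions in $q$. This step is a routine verification built into the very definition of $[x]_q$ on $\Q$.

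To pass to irrational $x$, pick a sequence of rationals $x_n\to x$. By the convergence theorem of \cite{MGOexp}, $[x_n]_q\to [x]_q$ coefficientwise as power series. Applied in turn to the rational sequences $x_n+1\to x+1$ and $-1/x_n\to -1/x$ (assuming $x\neq 0$), the same theorem yields
\[
[x_n+1]_q \longrightarrow [x+1]_q,\qquad \left[-\tfrac{1}{x_n}\right]_q \longrightarrow \left[-\tfrac{1}{x}\right]_q.
\]
But by the rational case these left-hand sides equal $q[x_n]_q+1$ and $-q^{-1}/[x_n]_q$ respectively. The first obviously converges coefficientwise to $q[x]_q+1$; for the second, one uses that the lowest-degree term of $[x]_q$ is nonzero, so that power-series inversion is continuous at $[x]_q$ and $1/[x_n]_q\to 1/[x]_q$. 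Equating limits gives the identities for $x$. The edge case $x=0$ in the second identity is treated separately by a direct computation.

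The main technical obstacle is the second identity, where one must verify that power-series inversion commutes with the limiting process. This requires a uniform control on the valuation (lowest-degree term) of $[x_n]_q$, which should be read off from the explicit form of the $q$-deformed continued fraction and is closely related to the non-vanishing statements already needed in \cite{MGOexp}. Once this continuity of inversion is in hand, both identities follow cleanly from the rational case together with the convergence theorem.
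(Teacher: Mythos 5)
Your proposal is correct and follows essentially the same route as the paper: verify the identities for rationals by direct manipulation of the $q$-continued fraction formula, then pass to irrationals via the coefficientwise convergence theorem of \cite{MGOexp}. The only cosmetic difference is that the paper organizes the rational case through the auxiliary relations $[-x]_{q}=-q^{-1}[x]_{q^{-1}}$ and $[1/x]_{q}=1/[x]_{q^{-1}}$ before combining them, and it is considerably terser than you are about why the limit step (in particular the inversion) is legitimate.
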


Regarding question (2) we restrict ourselves to the class of real quadratic irrationals and obtain the following results.
\begin{thm}\label{result1}
Let $x=\frac{r\pm\sqrt{p}}{s}$ be a quadratic irrational. Its $q$-deformation $[x]_{q}$ satisfy the following 
\begin{itemize}
\item[(i)] \label{genexp} 
$[x]_{q}=\frac{\Rc\pm\sqrt\Pc}{\Sc}$, with $\Rc, \Pc, \Sc \in \Z[q]$, and $ \Pc$ a palindrome;
\item[(ii)] 
$[x]_{q}$ is solution of an equation $\A X^{2}+\B X+\cC=0$, with $\A, \B, \cC \in \Z[q]$;
\item[(iii)]\label{matit} there exists a matrix $M_{q}\in\GL(2,\Z[q^{\pm1}])$ such that $M_{q}\cdot [x]_{q}=[x]_{q}$;
\item[(iv)] $[x]_{q}$ has a periodic continued fraction expansion.
\end{itemize}
\end{thm}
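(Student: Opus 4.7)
The plan is to establish part (iii) first, leveraging Theorem \ref{relx}, and then deduce parts (ii), (i), and (iv) from it. The key observation is that Theorem \ref{relx} furnishes $q$-deformations $T_q = \bigl(\begin{smallmatrix} q & 1 \\ 0 & 1\end{smallmatrix}\bigr)$ and $S_q = \bigl(\begin{smallmatrix} 0 & -q^{-1} \\ 1 & 0\end{smallmatrix}\bigr)$ of the standard generators of $\PSL(2,\Z)$, with the property that their M\"obius action intertwines with the $q$-deformation map on $\R$.

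For (iii), recall the classical fact that a real irrational $x$ is quadratic if and only if there is a nontrivial $M \in \PSL(2,\Z)$ with $M \cdot x = x$. Fix such an $M$ and write $M = g_1 g_2 \cdots g_N$ as a word in $T$ and $S$. Define $M_q := g_{1,q} g_{2,q} \cdots g_{N,q}$. Each factor has determinant in $\{q^{\pm 1}\}$, so $M_q \in \GL(2, \Z[q^{\pm 1}])$. By induction on $N$, using Theorem \ref{relx}, $M_q \cdot [x]_q = [M \cdot x]_q = [x]_q$.

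For (ii), writing $M_q = \bigl(\begin{smallmatrix} \alpha & \beta \\ \gamma & \delta\end{smallmatrix}\bigr)$, the fixed-point equation immediately gives $\gamma [x]_q^2 + (\delta - \alpha)[x]_q - \beta = 0$; multiplying by a suitable power of $q$ clears denominators and produces $\A, \B, \cC \in \Z[q]$. For (i), applying the quadratic formula yields
\[
[x]_q = \frac{(\alpha - \delta) \pm \sqrt{\Tr(M_q)^2 - 4\det(M_q)}}{2\gamma}.
\]
Since $\det(M_q)$ is a monomial $\pm q^n$, and since by the other main result of the paper (announced in the abstract) the trace $\Tr(M_q)$ of any element of $\PG$ is a palindromic polynomial, the discriminant $\Pc := \Tr(M_q)^2 - 4\det(M_q)$ is palindromic, and after rescaling by appropriate monomials we extract $\Rc, \Sc \in \Z[q]$ in the claimed form.

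For (iv), we argue by iterating Theorem \ref{relx}: if $[y]_q$ denotes the current value in the $q$-continued fraction algorithm, one extracts an integer part $a$ so that $[y]_q - [a]_q = q^a [y-a]_q$, then passes to $[-1/(y-a)]_q = -q^{-1}/[y-a]_q$, and so on. Because the classical continued fraction of $x$ is eventually periodic, the algorithm applied to $[x]_q$ produces the same combinatorial sequence of integer parts and hence is eventually periodic. The main obstacle is (iv): one must pin down the correct notion of $q$-continued fraction expansion for the algebraic function $[x]_q$ (as opposed to the combinatorial $q$-continued fraction of a rational) and verify that it genuinely recurs with the expected period. A secondary subtlety, in (i), is controlling the factor of $2$ introduced by the quadratic formula and ensuring the normalization of $\Rc, \Pc, \Sc$ gives integer polynomials rather than merely elements of $\Z[\tfrac{1}{2}][q]$.
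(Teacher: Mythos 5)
Your treatment of (iii) and (ii) is exactly the paper's: Proposition \ref{action} (proved by reducing to the generators via Theorem \ref{relx}) gives $[M]_{q}\cdot[x]_{q}=[x]_{q}$ for a hyperbolic $M$ fixing $x$, and the fixed-point equation yields the quadratic relation. Two points in (i), however, need more care than you give them. First, an element of $\PG$ has a trace defined only up to a factor $\pm q^{N}$, so you must fix a representative; the paper takes $M=M(c_{1},\dots,c_{k})$ with all $c_{i}>0$, so that $M_{q}(c_{1},\dots,c_{k})$ has entries in $\Z[q]$ and determinant $q^{\sum_{i}(c_{i}-1)}$. Second, ``palindrome squared minus a monomial is a palindrome'' is false in general --- $(1+q)^{2}-4q^{5}$ is not a palindrome --- so you need the monomial $4\det(M_{q})=4q^{\sum_{i}(c_{i}-1)}$ to sit at the \emph{center} of the palindrome $\Tr(M_{q})^{2}$. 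The paper secures this by the degree computation $\deg \Tr M_{q}(c_{1},\dots,c_{k})=\sum_{i}(c_{i}-1)$, so that the square is a palindrome of even degree with median coefficient at $q^{\sum_{i}(c_{i}-1)}$. This alignment is a small but genuine missing step in your argument. (By contrast, your worry about the factor of $2$ is unfounded: the paper simply takes $\Sc=2E_{k-1}(c_{2},\dots,c_{k})_{q}\in\Z[q]$ as the denominator, so nothing ever leaves $\Z[q]$.)

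For (iv) you have made the problem harder than it is, and the route you sketch is the one part that would not go through as written. There is no ``$q$-continued fraction algorithm'' to run on the Laurent series $[x]_{q}$: the paper \emph{defines} the infinite expansion $\llbracket c_{1},c_{2},\dots\rrbracket_{q}$ as the limit (in the sense of Theorem \ref{stab}) of the truncations $\llbracket c_{1},\dots,c_{n}\rrbracket_{q}$, and this limit equals $[x]_{q}$ whenever $\llbracket c_{1},\dots,c_{n}\rrbracket\to x$. Since Lagrange's theorem gives $x=\llbracket b_{1},\dots,b_{l},\overline{c_{1},\dots,c_{k}}\rrbracket$ with periodic tail, $[x]_{q}=\llbracket b_{1},\dots,b_{l},\overline{c_{1},\dots,c_{k}}\rrbracket_{q}$ is periodic by construction; the paper calls this a tautology. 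Your proposed extraction of ``integer parts'' from the power series is both unnecessary and unjustified, and should be replaced by this definitional argument.
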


One of the main ingredient in our approach is a $q$-deformation of the modular group $\PSL(2,\Z)$. More precisely one considers the following matrices that are $q$-deformations of standard generators of $\PSL(2,\Z)$:
$$
R_{q}:=
\begin{pmatrix}
q&1\\[4pt]
0&1
\end{pmatrix},
\qquad
S_{q}:=
\begin{pmatrix}
0&-q^{-1}\\[4pt]
1&0
\end{pmatrix},
$$
and one defines $\PSL_{q}(2,\Z)$ as the subgroup generated by the classes of $R_{q}$ and $S_{q}$ in the quotient group
$\GL(2, \Z[q^{\pm 1}])/\{\pm q^{N}\Id, N\in \Z\}$.
\begin{prop}\label{iso}
The group $\PG$ is isomorphic to $\PSL(2,\Z)$.
\end{prop}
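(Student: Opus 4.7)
The plan is to write down the obvious candidate homomorphism $\phi : \PSL(2,\Z) \to \PG$ sending the classes of the standard generators
$S = \left(\begin{smallmatrix} 0 & -1 \\ 1 & 0\end{smallmatrix}\right)$
and
$R = \left(\begin{smallmatrix} 1 & 1 \\ 0 & 1\end{smallmatrix}\right)$
of $\PSL(2,\Z)$ to the classes of $S_q$ and $R_q$ respectively, and then to show that $\phi$ is bijective.

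For well-definedness I would start from the standard presentation $\PSL(2,\Z)=\langle S,R\mid S^{2}=1,\,(SR)^{3}=1\rangle$ and verify by direct $2\times 2$ matrix multiplication that
\[
S_q^{2}=-q^{-1}\Id, \qquad (S_qR_q)^{3}=-\Id.
\]
Both right-hand sides lie in the central subgroup $\{\pm q^{N}\Id : N\in\Z\}$ that is modded out in the definition of $\PG$, so the defining relations of $\PSL(2,\Z)$ are satisfied and $\phi$ is a well-defined group homomorphism. Surjectivity is then immediate, since by definition $\PG$ is generated by the classes of $S_q$ and $R_q$.

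Injectivity is the only step that uses anything beyond a bare matrix computation, and the trick I would use is to specialize $q=1$. If a word $w$ in $S, R$ lies in $\ker\phi$, then the matrix $w(S_q, R_q)$ equals $\pm q^{N}\Id$ as an element of $\GL(2,\Z[q^{\pm 1}])$. Since $R_q|_{q=1}=R$ and $S_q|_{q=1}=S$, substituting $q=1$ in the Laurent-polynomial entries of both sides yields $w(S,R)=\pm\Id$ in $\GL(2,\Z)$, hence $w$ is trivial in $\PSL(2,\Z)$.

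I do not anticipate a serious obstacle: the content of the argument lives entirely in the two matrix identities above, and the evaluation map $q\mapsto 1$ on $\Z[q^{\pm 1}]$ is manifestly well defined. The only point that requires a little care is to perform the specialization \emph{before} passing to the quotient by $\{\pm q^{N}\Id\}$, so that the statement ``$w(S_q,R_q)=\pm q^{N}\Id$'' is interpreted in $\GL(2,\Z[q^{\pm 1}])$ where setting $q=1$ is unambiguous.
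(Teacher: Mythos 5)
Your proposal is correct and follows essentially the same route as the paper: one checks that $S_q^{2}$ and $(R_qS_q)^{3}$ are central scalars $\pm q^{N}\Id$, so the presentation $\langle S,R\mid S^{2}=(SR)^{3}=1\rangle$ of $\PSL(2,\Z)$ yields a surjective homomorphism onto $\PG$. The paper leaves injectivity implicit, whereas your specialization $q\mapsto 1$ (performed in $\GL(2,\Z[q^{\pm1}])$ before passing to the quotient) is exactly the right way to make that step explicit.
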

As a reformulation of Theorem \ref{relx} one obtains that the action of the modular group by M\"obius transformations commute with the $q$-deformations. This is a key tool for the proof of Theorem \ref{genexp}. 
Moreover we obtain the following interesting combinatorial property on the traces of the elements of $\PG$.

\begin{thm}\label{tracePG}
The traces of the elements in $\PG$ are palindromic polynomials in $\Z[q]$ with positive integer coefficients, modulo a multiplicative factor $\pm q^{N}$. \end{thm}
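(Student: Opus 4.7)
The plan is to use the isomorphism $\PG \cong \PSL(2,\Z)$ of Proposition~\ref{iso} together with the fact that the trace is a class function invariant under the equivalence $M \sim \pm q^N M$: this reduces the claim to verifying palindromicity and positivity on one representative of each conjugacy class of $\PSL(2,\Z)$. These classes divide into three families: the finite-order classes (the identity and the classes of $S_q$, $R_qS_q$, $(R_qS_q)^2$); the parabolic classes (conjugates of $R_q^n$ for $n\in\Z\setminus\{0\}$); and the hyperbolic classes, each of which, by the classical continued-fraction description of $\PSL(2,\Z)$, admits a positive representative of the form
$$
W_q = R_q^{a_1}S_q R_q^{a_2}S_q\cdots R_q^{a_n}S_q,\qquad a_i\ge 1.
$$

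The finite-order and parabolic cases are handled directly: the traces equal $2$, $0$, $1$, $-1$ and $q^n+1$ respectively, each palindromic with non-negative coefficients once a factor $\pm q^N$ is cleared. For the hyperbolic case, a short computation gives
$$
R_q^a S_q = \begin{pmatrix}[a]_q & -q^{a-1}\\ 1 & 0\end{pmatrix},
$$
and, writing $W_q=\left(\begin{smallmatrix}P_n & P'_n\\ Q_n & Q'_n\end{smallmatrix}\right)$, the matrix recurrence yields $P'_n=-q^{a_n-1}P_{n-1}$, $Q'_n=-q^{a_n-1}Q_{n-1}$, together with
$$
P_n=[a_n]_q P_{n-1}-q^{a_{n-1}-1}P_{n-2},\qquad Q_n=[a_n]_q Q_{n-1}-q^{a_{n-1}-1}Q_{n-2},
$$
so that $\Tr(W_q)=P_n-q^{a_n-1}Q_{n-1}$. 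This identifies the trace with a $q$-analogue of the classical cyclic continuant formula.

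It remains to prove that this polynomial is palindromic with positive coefficients. For palindromicity I would combine careful tracking of the minimal and maximal degrees of $P_n$ and $Q_n$ through the recurrence, exploiting that each $[a_i]_q$ is itself palindromic of degree $a_i-1$, with the involutive symmetry given by $q\mapsto q^{-1}$ composed with the reversal $(a_1,\dots,a_n)\mapsto(a_n,\dots,a_1)$; this combined operation corresponds to a conjugation and cyclic rotation of $W_q$ and therefore must leave $\Tr(W_q)$ invariant up to a factor $\pm q^N$. The main obstacle is positivity, since the recurrence explicitly contains a minus sign and naive induction does not preserve non-negativity of coefficients. The cleanest route to resolve this would be to give a combinatorial model for the $q$-continuant, for instance as a generating function of weighted tilings of a strip or of weighted lattice paths, in the spirit of the snake-graph combinatorics of cluster algebras, in which positivity is manifest and the apparent cancellations in the recurrence correspond to configurations excluded by the model. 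Failing such a model, one would fall back on a delicate inductive argument decomposing $\Tr(W_q)$ into explicitly positive summands.
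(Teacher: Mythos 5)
Your setup (reduction to words $R_q^{a_1}S_q\cdots R_q^{a_n}S_q$ and the continuant recurrence for the trace) matches the paper's starting point, but the two substantive claims of the theorem are exactly the two points your argument leaves unproved. For palindromicity, the assertion that $q\mapsto q^{-1}$ composed with reversal of the word ``corresponds to a conjugation and cyclic rotation'' is where the difficulty is hidden, and as stated it is not correct. The transpose gives only the easy identity $M_{q}(a_n,\ldots,a_1)=q^{\sum_i(a_i-1)}\,M_{q^{-1}}(a_1,\ldots,a_n)$; to deduce palindromicity one still needs $\Tr M_q(a_1,\ldots,a_n)=\Tr M_q(a_n,\ldots,a_1)$ \emph{at the same value of $q$}. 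Classically this holds because $\left(\begin{smallmatrix}c&-1\\1&0\end{smallmatrix}\right)$ is conjugate to its inverse and $\Tr A^{-1}=\Tr A$ on $\SL(2,\Z)$; in the deformed setting $\det M_q(c)=q^{c-1}\neq 1$, the reversed word is in general \emph{not} conjugate to the original, and the trace identity becomes a genuine combinatorial statement. The paper devotes a separate multi-step induction to it (Lemma~\ref{TrPalin}, Steps 1--3 of \S\ref{pfPalin}), and nothing in your sketch replaces that.

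For positivity you openly defer to a hoped-for combinatorial model or ``a delicate inductive argument,'' so there is no proof there either; this is the other half of the theorem, and in the paper it is Lemma~\ref{TrPos}, proved by rerunning the palindromicity induction while tracking the signs of the auxiliary quantities $C-qB$, $A+B-C$ and $[c]A+B-q^{c-1}C$. Note also that your normal form $a_i\geq 1$ is too weak for such an induction: the paper invokes Lemma~\ref{lemfarey} to write every element as $M(c_1,\ldots,c_k)$ with $c_i\geq 2$ for all $i<k$ (after a cyclic rotation and absorbing $M_q(1,1,1)=-\Id$), precisely because interior coefficients equal to $1$ trigger the reduction $M_q(c,1,d)=qM_q(c-1,d-1)$ and break the inductive step. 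So your class-by-class reduction is a reasonable skeleton, but both halves of the theorem still require the paper's inductive lemmas (or equivalents) to be supplied.
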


In all the examples we observe furthermore that the sequence of coefficients in the traces is unimodal, and we formulate this property as a conjecture.

Theorem \ref{tracePG} implies the palindromicity property of the polynomial $ \Pc$ in
Theorem \ref{genexp}.
Usually $q$-analogues of positive integers are given by polynomials in $q$ with positive integer coefficients. One could expect that the polynomial $\Pc$ has positive coefficients, but this is not the case in general. 
However we observe on all the examples we have computed that this polynomial always factors out with a non-trivial polynomial that has positive coefficients.

The paper is organized as follows.

In Section \ref{secdef} we give the definitions of the $q$-deformed rational and irrational numbers introduced in \cite{MGOfmsigma} and \cite{MGOexp}. We present a more general definition for the $q$-irrationals and obtain a new description using continued fraction expansions with arbitrary integer coefficients, Theorem \ref{genfrac}. We also describe $q$-irrational numbers using infinite continued fraction expansions.

In Section \ref{secgroup} we introduce a $q$-deformation of the group $\PSL(2,\Z)$. This $q$-deformation is the main tool to prove Theorem \ref{result1} and Theorem \ref{genfrac}. The entries of the $q$-deformed matrices are Laurent polynomials in $q$ with integer coefficients. We prove Theorem \ref{tracePG} and conjecture the unimodality property of the traces. We illustrate the results with examples of Cohn matrices arising in the theory of Markov triples.

In Section \ref{secquad} we focus on the case of quadratic irrationals. The general expression for the $q$-quadratic irrationals given in Theorem \ref{result1} (i) was not \textit{a priori} guaranteed from the deformation process introduced in \cite{MGOexp} but it becomes clear from the view point of the matrix action. Explicit formulas for the polynomials $\Rc$, $\Pc$, $\Sc$ and  $\A$, $\B$, $\cC$ are given in terms of continuants, Propositions \ref{ABC} and \ref{PRS}.
Finally we give concrete examples of $q$-quadratic irrationals.

\section{$q$-deformed numbers and $q$-continued fractions}\label{secdef}
Following \cite{MGOfmsigma} and \cite{MGOexp}, we define the $q$-deformation $[x]_{q}$ of every real numbers $x$ as a Laurent series with integer coefficients. In the latter papers the definition of $[x]_{q}$ was given for $x>1$ and then extend to all reals using a recursion $[x-1]_{q}=q^{-1}[x]_{q}-q^{-1}$. Here we use a uniformized and equivalent definition for all reals $x$.
\subsection{$q$-rationals} We adopt 
the following classical $q$-deformations of integers
\begin{eqnarray*}
[n]_{q}&=&\frac{1-q^n}{1-q}=1+q+q^{2}+\cdots+q^{n-1}\\[6pt]
[-n]_{q}&=&\frac{1-q^{-n}}{1-q}=-q^{-1}-q^{-2}-\cdots-q^{-n}.
\end{eqnarray*}
for any $n\in \Z_{> 0}$.
Obviously one has $[n]_{q}\in \Z_{> 0}[q]$ and $[-n]_{q}\in \Z_{<0}[q^{-1}]$. We also assume $[0]_{q}=0$.

Considering a rational $\frac{r}{s}\in \Q$ we always assume $r$ and $s$ coprime and $s>0$. We use continued fraction expansions. Every $\frac{r}{s}\in \Q$ has a unique expression of the form
\begin{equation}\label{regfrac}
\frac{r}{s}
\quad=\quad
a_1 + \cfrac{1}{a_2 
          + \cfrac{1}{\ddots +\cfrac{1}{a_{2m}} } }
\end{equation}
with $a_{i}\in \Z$ and $a_{i}\geq 1$ for $i\geq2$. We use the notation $[a_1,a_2,\ldots, a_{2m}]$ for the RHS of \eqref{regfrac}.

Following \cite{MGOfmsigma} one defines the $q$-deformation of $\frac{r}{s}=[a_1,a_2,\ldots, a_{2m}]$ as

\begin{equation}\label{qregfrac}
\textstyle[\frac{r}{s}]_{q}:=
[a_1]_{q} + \cfrac{q^{a_{1}}}{[a_2]_{q^{-1}} 
          + \cfrac{q^{-a_{2}}}{[a_{3}]_{q} 
          +\cfrac{q^{a_{3}}}{[a_{4}]_{q^{-1}}
          + \cfrac{q^{-a_{4}}}{
        \cfrac{\ddots}{[a_{2m-1}]_q+\cfrac{q^{a_{2m-1}}}{[a_{2m}]_{q^{-1}}}}}
          } }} .
\end{equation}
The continued fraction in the RHS of equation \eqref{qregfrac} is denoted by
$[a_{1}, \ldots, a_{2m}]_{q}$.

Alternatively one can use the negative continued fraction expansion, also known as \textit{Hirzebruch-Jung continued fractions}. 
Every $\frac{r}{s}\in \Q$ has a unique expression of the form
\begin{equation}\label{negfrac}
\textstyle\frac{r}{s}
=c_1 - \cfrac{1}{c_2 
          - \cfrac{1}{\ddots - \cfrac{1}{c_{k}} } }
\end{equation}
with $c_{i}\in \Z$ and $c_{i}\geq 2$ for $i\geq2$. As in \cite{Hir} we use the notation $\llbracket{}c_1,c_2,\ldots, c_{k}\rrbracket{}$ for the RHS of \eqref{negfrac}.

One obtains the $q$-deformation of $\frac{r}{s}=\llbracket{}c_1,c_2,\ldots, c_{k}\rrbracket{}\in \Q$ as
\begin{equation}\label{qnegfrac}
\textstyle[\frac{r}{s}]_{q}
=[c_1]_{q} - \cfrac{q^{c_{1}-1}}{[c_2]_{q} 
          - \cfrac{q^{c_{2}-1}}{\ddots - \cfrac{q^{c_{k-1}-1}}{[c_{k}]_{q}} } }.
\end{equation}
The continued fraction in the RHS of equation \eqref{qnegfrac} is denoted by $\llbracket{}c_1,c_2,\ldots, c_{k}\rrbracket{}_{q}$. 

The fact that the two deformations \eqref{qregfrac} and \eqref{qnegfrac} coincide is not obvious at first sight and was proved in \cite{MGOfmsigma} when the coefficients satisfy $a_{i}>1$ and $c_{i}\geq 2$ for all $i$. It turns out that this result can be extended to fractions with arbitrary integer coefficients. The following theorem will be deduced from results of the next section, see \S\ref{genfracpf}.
\begin{thm}\label{genfrac}
Let $a_{1}, \ldots, a_{2m}$ and $ c_1,c_2,\ldots, c_{k}$ be sequences of integers such that the continued fractions 
 $[a_{1}, \ldots, a_{2m}]_{}$ and $\llbracket{}c_1,c_2,\ldots, c_{k}\rrbracket{}_{}$ are well defined and are equal to the same rational number $\frac{r}{s}$. One has
$$[a_{1}, \ldots, a_{2m}]_{q}=\llbracket{}c_1,c_2,\ldots, c_{k}\rrbracket{}_{q}=\left[\frac{r}{s}\right]_{q}.$$
\end{thm}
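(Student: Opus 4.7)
The plan is to lift both continued fraction formulas to Möbius actions of explicit elements of the $q$-deformed modular group $\PG$ from Section \ref{secgroup}, and then apply the isomorphism $\PG \cong \PSL(2,\Z)$ of Proposition \ref{iso}.

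A reformulation of Theorem \ref{relx} is that $R_q$ and $S_q$ implement, via Möbius action, the maps $[y]_q \mapsto [y+1]_q$ and $[y]_q \mapsto [-1/y]_q$. Using the auxiliary identities $R_q^a \cdot 0 = [a]_q$, $S_q \cdot \infty = 0$, and $[-a]_q = -q^{-1}[a]_{q^{-1}}$, a straightforward induction on length produces two matrix realizations of the formulas \eqref{qregfrac} and \eqref{qnegfrac}:
\begin{align*}
[a_1, \ldots, a_{2m}]_q &= \bigl( R_q^{a_1} S_q R_q^{-a_2} S_q R_q^{a_3} S_q \cdots S_q R_q^{-a_{2m}} \bigr) \cdot 0, \\
\llbracket c_1, \ldots, c_k \rrbracket_q &= \bigl( R_q^{c_1} S_q R_q^{c_2} S_q \cdots R_q^{c_k} S_q \bigr) \cdot \infty.
\end{align*}
Call these matrix products $M_q$ and $N_q$. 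Their specialisations at $q=1$ are classical matrices $M$ and $N$ in $\PSL(2,\Z)$ satisfying $M \cdot 0 = [a_1, \ldots, a_{2m}] = \frac{r}{s}$ and $N \cdot \infty = \llbracket c_1, \ldots, c_k \rrbracket = \frac{r}{s}$.

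Since $S \cdot \infty = 0$, the element $(NS)^{-1} M$ fixes $0$ in the classical Möbius action, so it lies in the stabilizer of $0$ in $\PSL(2,\Z)$, which is $S \langle T \rangle S^{-1}$. Write $(NS)^{-1} M = S T^n S^{-1}$ for some $n \in \Z$. By Proposition \ref{iso} this word identity holds in $\PG$, which gives
\begin{equation*}
M_q = N_q S_q^{2} R_q^{n} S_q^{-1} = N_q R_q^{n} S_q^{-1},
\end{equation*}
using that $S_q^2 = -q^{-1}\Id$ is trivial in $\PG$. Evaluating on $0$ and using $S_q^{-1} \cdot 0 = \infty$ together with $R_q^n \cdot \infty = \infty$ yields $M_q \cdot 0 = N_q \cdot \infty$, which is the desired equality $[a_1, \ldots, a_{2m}]_q = \llbracket c_1, \ldots, c_k \rrbracket_q$. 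Applying the same argument to the standard reduced regular expansion of $\frac{r}{s}$ identifies this common value with $[\frac{r}{s}]_q$.

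The main technical obstacle is the induction behind the first matrix identity: the alternation between $q$ and $q^{-1}$ in \eqref{qregfrac} must be tracked through iterated Möbius actions of $R_q^{\pm a}$ and $S_q$, with the identity $[-a]_q = -q^{-1}[a]_{q^{-1}}$ absorbing the signs produced by each $S_q$. Once the two matrix identities are in place, the conclusion follows cleanly from the group-theoretic lifting supplied by Proposition \ref{iso}.
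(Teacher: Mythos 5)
Your argument is correct and is essentially the paper's own proof: both realize the two $q$-continued fractions as images of base points under explicit words in $R_{q}$ and $S_{q}$ (the content of the paper's Lemma \ref{keylem}, obtained there via the decompositions $M^{+}(a_{1},\ldots,a_{2m})=R^{a_{1}}L^{a_{2}}\cdots$ and $M(c_{1},\ldots,c_{k})=R^{c_{1}}S\cdots R^{c_{k}}S$ together with the generalized continued-fraction matrix lemma, rather than by your induction on M\"obius actions), and both then transfer a classical relation between those words through the homomorphism $\PSL(2,\Z)\to\PG$ of Proposition \ref{iso}. The only cosmetic difference is that you locate the discrepancy $R^{n}$ via the stabilizer of $0$, whereas the paper compares the first columns of $M^{+}(a_{1},\ldots,a_{2m})$ and $M(c_{1},\ldots,c_{k})$ directly to obtain $M(c_{1},\ldots,c_{k})=M^{+}(a_{1},\ldots,a_{2m})R^{n}$.
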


\begin{rem}
In \cite{MGOfmsigma} the $q$-deformations of rational numbers was defined in the case of numbers greater than 1, i.e. for $a_{1}\geq1$ and $c_{1}\geq 2 $ in the expansions \eqref{regfrac} and \eqref{negfrac}. The notion was later extend to all rational numbers in \cite{MGOexp} using the recursion
$$
[x-1]_{q}=q^{-1}[x]_{q}-q^{-1}.
$$
It turns out that this way to extend the definition is equivalent to allow $a_{1}\in \Z$ and $c_{1}\in \Z$ in the expansions \eqref{regfrac} and \eqref{negfrac},  see \cite{TheseLu} for details.
\end{rem}

\begin{ex} (a) Here are some examples that can be computed directly from the definitions
\begin{equation}
\begin{array}{cccccccccc}
\left[-\frac{5}{3}\right]_{q}&=&[-2,3]_{q}&=&\llbracket -1,2,2\rrbracket_{q}&=&-q^{-2}\,\frac{1+2q+q^{2}+q^{3}}{1+q+q^{2}}\\[6pt]
\left[-\frac{1}{4}\right]_{q}&=&[-1,1,3]_{q}&=&\llbracket 0,4\rrbracket_{q}&=&-q^{-1}\frac{1}{1+q+q^{2}+q^{3}}\\[6pt]
\left[\frac{5}{12}\right]_{q}&=&[0,2,2,2]_{q}&=&\llbracket 1,3,2\rrbracket_{q}&=&q^{2}\frac{1+2q+q^{2}+q^{3}}{1+2q+3q^{2}+3q^{3}+2q^{4}+q^{5}}\\[6pt]
\left[\frac{3}{5}\right]_{q}&=&[0,1,1,2]_{q}&=&\llbracket 1,2,4,2\rrbracket_{q}&=&q^{}\frac{1+q+q^{2}}{1+2q+q^{2}+q^{3}}\\[6pt]
\left[\frac{5}{3}\right]_{q}&=&[1,1,1,1]_{q}&=&\llbracket 2,3 \rrbracket_{q}&=&\frac{1+q+2q^{2}+q^{3}}{1+q+q^{2}}\\[6pt]
\left[\frac{12}{5}\right]_{q}&=&[2,2,1,1]_{q}&=&\llbracket 3,2,3\rrbracket_{q}&=&\frac{1+2q+3q^{2}+3q^{3}+2q^{4}+q^{5}}{1+q+2q^{2}+q^{3}}\\[6pt]
\end{array}
\end{equation}
(b) Only very few $q$-deformations of rationals are obtained as the ratio of the $q$-integers in the numerators and denominators. For instance for $r\in \Z_{>0}$,
$$
\left[\frac{r+1}{r}\right]_{q}=\frac{[r+1]_{q}}{[r]_{q}},\qquad \left[\frac{r}{r+1}\right]_{q}=q\frac{[r]_{q}}{[r+1]_{q}},
$$

$$
  \left[-\frac{r+1}{r}\right]_{q}=-q^{-2}\frac{[r+1]_{q}}{[r]_{q}},\qquad \left[-\frac{r}{r+1}\right]_{q}=-q^{-1}\frac{[r]_{q}}{[r+1]_{q}}.
$$
\end{ex}

\begin{ex}
Let us illustrate Theorem \ref{genfrac}. Alternatively, 
$\frac53=[2,-1,-1,2]
=\llbracket -1,0,3,3 \rrbracket$ and one computes
$$[2,-1,-1,2]_{q}=1+q + \cfrac{q^{2}}{-q
          + \cfrac{q^{}}{-q^{-1}
          +\cfrac{q^{-1}}{1+q^{-1}         
          } }}=\frac{1+q+2q^{2}+q^{3}}{1+q+q^{2}},$$
          
 $$\llbracket-1,0,3,3\rrbracket_{q}=-q^{-1} - \cfrac{q^{-2}}{0
          - \cfrac{q^{-1}}{1+q+q^{2}
          -\cfrac{q^{2}}{1+q+q^{2}        
          } }}=\frac{1+q+2q^{2}+q^{3}}{1+q+q^{2}}.$$
\end{ex}

The $q$-deformations $[a_{1}, \ldots, a_{2m}]_{q}$ and $\llbracket{}c_1,c_2,\ldots, c_{k}\rrbracket{}_{q}$ can be written as rational functions in $q$ with integer coefficients. 
The general expression for $[\frac{r}{s}]_{q}$ is as follows.
\begin{prop}[\cite{MGOfmsigma}]\label{R/S}
Let $\frac{r}{s}$ be a non zero rational number.

(i) There exist a unique couple of coprime polynomials $\Rc$ and $\Sc$ in $ \Z_{}[q]$ and $N\in \Z$ such that 
$$
\left[\frac{r}{s}\right]_{q}=
\pm q^{-N}\frac{\Rc}{\Sc},
$$
where the sign coincides with the sign of $\frac{r}{s}$;

(ii)
The polynomials $\Rc$ and $\Sc$ have positive integer coefficients, they satisfy $\Rc(1)=|r|$, $\Sc(1)=s$, and have constant and leading coefficients equal to 1;

(iii) If $\frac{r}{s}\geq1$ then $N=0$, otherwise $N$ is characterized  by
$$
-N\leq \frac{r}{s}<-N+1, \quad \text{ or } \quad 0<\frac{1}{1-N}<\frac{r}{s}\leq \frac{1}{-N}< 1.
$$
\end{prop}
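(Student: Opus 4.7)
The plan is to proceed by induction on the length of the regular continued fraction expansion of $r/s$, using Theorem \ref{relx} to transport the canonical form $\pm q^{-N}\Rc/\Sc$ through the reduction steps. For the base case $r/s=n\in\Z\setminus\{0\}$, direct inspection of the definition gives $[n]_q=1+q+\cdots+q^{n-1}$ when $n>0$ (take $\Rc=[n]_q$, $\Sc=1$, $N=0$) and $[n]_q=-q^{-|n|}(1+q+\cdots+q^{|n|-1})$ when $n<0$ (take $\Rc=[|n|]_q$, $\Sc=1$, $N=|n|$); all four assertions are then immediate, and the characterization of $N$ in (iii) holds since $-|n|\leq n<-|n|+1$.

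For the inductive step, any non-integer rational can be reduced to an integer by a finite sequence of operations $x\mapsto x-1$ and $x\mapsto -1/x$. If $[y]_q=\epsilon q^{-M}\Rc'/\Sc'$ is in canonical form, with $\epsilon=\pm 1$ and $\Rc',\Sc'$ coprime in $\Z[q]$ having positive coefficients and constant/leading coefficient $1$, then Theorem \ref{relx} gives
\[
[y+1]_q=\frac{\epsilon q^{-M+1}\Rc'+\Sc'}{\Sc'}, \qquad [-1/y]_q=-\epsilon q^{M-1}\,\frac{\Sc'}{\Rc'}.
\]
The second operation only swaps numerator and denominator (with an adjustment of sign and power of $q$), so it trivially preserves positivity of coefficients, normalization, and coprimality, and the sign and exponent match the characterization of $N$ in (iii). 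The first operation is more delicate: after simplification into canonical form, the numerator $\epsilon q^{-M+1}\Rc'+\Sc'$ must be verified to have positive coefficients, constant and leading coefficient $1$, and to remain coprime to $\Sc'$. This calls for a case analysis on the sign of $\epsilon$ and the relation between $M$ and the degree of $\Rc'$; the cleanest route is to first treat the subcase $r/s\geq 1$, where $M=0$ and $\epsilon=+1$ throughout the induction, then bootstrap to arbitrary $r/s$ using $x\mapsto -1/x$.

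Uniqueness follows from coprimality of $(\Rc,\Sc)$ combined with the normalization that the constant term of $\Sc$ equals $1$, which forces both the rational function $\Rc/\Sc$ and the integer $N$. The evaluations $\Rc(1)=|r|$ and $\Sc(1)=s$ are maintained inside the same induction: the base case is immediate from $[n]_q|_{q=1}=n$, and each reduction step preserves these identities by specializing Theorem \ref{relx} at $q=1$ and tracking the effect on $|r|,s$ of the underlying operations $x\mapsto x\pm 1$ and $x\mapsto -1/x$ on the rational.

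The main obstacle is the positivity assertion in (ii): showing that the step $y\mapsto y+1$ preserves positivity of the numerator coefficients after the required simplification. The cleanest route is via explicit $q$-continuant formulas for $\Rc$ and $\Sc$ in terms of the coefficients $(a_i)$ of the continued fraction, which will emerge naturally from the matrix viewpoint developed in Section \ref{secgroup}: appropriate products involving $R_q$ and $S_q$ have entries that are polynomials in $q$ with non-negative coefficients and compute $\Rc$ and $\Sc$ directly, rendering (ii) transparent.
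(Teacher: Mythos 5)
First, a point of comparison: the paper does not prove Proposition \ref{R/S} at all --- it is imported from \cite{MGOfmsigma} --- so there is no internal proof to measure your argument against; it must stand on its own. It does not, because the positivity claim in (ii), which you correctly identify as ``the main obstacle,'' is never actually established. Your induction transports the canonical form through the generators $y\mapsto y+1$ and $y\mapsto -1/y$, but the reduction of a rational to an integer by words in these two moves unavoidably passes through negative intermediate values and through values in $(0,1)$: writing $x=a_1+1/x'$ means $x=R^{a_1}S\cdot x'$, which visits $-1/x'<0$. So the case $\epsilon=-1$, where the new numerator $\Sc'-q^{-M+1}\Rc'$ is a genuine difference of positive polynomials, cannot be dodged; already $[1/2]_q=q[-1/2]_q+1$ has numerator $-1+(1+q)=q$, positive only after cancellation of constant terms. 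Your proposed remedy (``first treat $r/s\geq 1$, where $M=0$ and $\epsilon=+1$ throughout the induction'') is therefore false as stated, and proving that these differences have nonnegative coefficients after factoring out the lowest power of $q$ is exactly as hard as the original claim. Deferring at that point to ``explicit $q$-continuant formulas\dots{} rendering (ii) transparent'' is not a completion of your induction but a switch to the different (and standard) proof of \cite{MGOfmsigma}, echoed in Section \ref{secgroup} of this paper: $\Rc$ and $\Sc$ are entries of the matrix products \eqref{qRegMat}--\eqref{qNegMat}, whose factors have nonnegative polynomial entries when the continued-fraction coefficients are in the admissible range, so positivity, the evaluations at $q=1$, and the leading and constant coefficients all drop out of \eqref{q-cont} at once.

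The secondary parts of your sketch are mostly sound: the base case, uniqueness from coprimality together with the normalization of constant terms, and preservation of coprimality under $y\mapsto y+1$ (since $\Sc'(0)=1$ forces $q\nmid\Sc'$) all go through. But part (iii) is only asserted to ``match the characterization of $N$''; pinning down $N$ on $(0,1)$ and on the negative reals requires an explicit case analysis on how the exponent shifts under the two generators, and this bookkeeping is not trivial --- it is in fact the only place where the precise inequalities in (iii) can be checked. As it stands, the proposal is a plausible plan whose two substantive assertions, positivity and the formula for $N$, are both left unproved.
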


\begin{rem}
In \cite{MGOfmsigma}, it was conjectured that $\Rc$ and $\Sc$ have unimodal sequences of coefficients. This was recently proved in some particular cases \cite{McCSS} but the conjecture is still open in full generality.
\end{rem}

\subsection{$q$-irrationals}
The $q$-rationals defined in the previous section can be viewed as Laurent series with integer coefficients, i.e. 
$$
\left[\frac{r}{s}\right]_{q}=\sum \limits_{k=-N}^{+\infty} \rho_{k} q^k
$$
with $N\in\Z_{\geq 0}$  as in Proposition \ref{R/S}, $\rho_{-N}=\pm 1$, and $\rho_{k}\in \Z$. If one is given a convergent sequence of rationals then the coefficients in the corresponding power series stabilize. This was proved in \cite{MGOexp} in the case of numbers greater than one. This phenomenon still hold in full generality, see \cite{TheseLu}.

\begin{thm}[\cite{MGOexp}]\label{stab}
Let $x\in \R$ be an irrational number and ${(x_{n})}_{n}$ a sequence of rationals converging to $x$, and let $\left[x_{n}\right]_{q}=\sum \limits_{k}^{} \chi_{n,k} q^k$ be the corresponding $q$-deformations expanded as Laurent series.

(i)
 For all~$k$, the sequence $( \chi_{n,k})_{n}$ converges;
 
 (ii) 
 The limit coefficient $\chi_{k}:=\lim_{n\to \infty}\chi_{n,k}$ is
an integer which depends only on $x$ and not on the choice of the converging sequence $(x_{n})_{n}$;

(iii) There exists $N\in \Z_{\geq 0}$ such that for all $k<-N$ one has $\chi_{k}=0$ and $\chi_{(-N)}=\pm1$.
\end{thm}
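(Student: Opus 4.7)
The plan is to bootstrap from the case $x>1$ established in \cite{MGOexp} by using the translation relation $[y+1]_{q}=q[y]_{q}+1$ from Theorem \ref{relx}. Given an irrational $x$ and a sequence of rationals $x_{n}\to x$, pick an integer $k$ so that $x+k>1$; then $x_{n}+k>1$ for all $n$ sufficiently large. Iterating the translation identity yields $[y+k]_{q}=q^{k}[y]_{q}+[k]_{q}$, so
$
[x_{n}]_{q}=q^{-k}\bigl([x_{n}+k]_{q}-[k]_{q}\bigr).
$
Applying the $x>1$ version of the result proved in \cite{MGOexp} to the shifted sequence $x_{n}+k\to x+k>1$ yields coefficient-wise stabilization of the Laurent expansions of $[x_{n}+k]_{q}$, and the linear transformation above transfers this stabilization to $[x_{n}]_{q}$ with indices shifted by $-k$. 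This proves (i), and part (ii) follows because the limit coefficients of $[x_{n}+k]_{q}$ depend only on $x+k$, hence only on $x$.

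For part (iii), I would apply Proposition \ref{R/S}(iii) directly to $x_{n}$: the integer $N_{n}$ appearing in $[x_{n}]_{q}=\pm q^{-N_{n}}\Rc/\Sc$ depends only on which of certain half-open intervals contains $x_{n}$ (the interval $[1,+\infty)$ giving $N=0$, intervals of the form $[-N,-N+1)$ giving $N\geq 1$, or intervals between consecutive unit fractions giving $N$ negative). Since $x$ is irrational, it lies in the interior of precisely one such interval, so $N_{n}$ is eventually equal to a common value $N$ for $n$ large enough; combined with Proposition \ref{R/S}(ii), which gives constant term $1$ for $\Rc$ and $\Sc$, this yields $\chi_{-N}=\pm 1$ and $\chi_{k}=0$ for $k<-N$, the statement in $\Z_{\geq 0}$ being obtained by padding with zero coefficients when $N$ would be negative.

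The substantive part lies in the $x>1$ result of \cite{MGOexp}, which I would establish via a contraction lemma for the $q$-continued fraction \eqref{qregfrac}: changing the tail $(a_{K+1},a_{K+2},\ldots)$ of the expansion perturbs only the Laurent coefficients of powers $q^{j}$ with $j\geq f(K)$, where $f(K)\to +\infty$ as $K\to +\infty$. This is proved by induction on the nesting depth, exploiting that each $[a_{i}]_{q^{\pm 1}}$ has constant term $1$, so its inverse in the ring of formal Laurent series is well defined, and that each nesting level contributes a factor of $q^{\pm a_{i}}$; the sign alternation in \eqref{qregfrac} together with $a_{i}\geq 1$ for $i\geq 2$ (automatic once $x>1$) forces the $q$-adic valuation of the perturbation to grow with $K$. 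Classical continued fraction theory then supplies that for any fixed $K$, the continued fraction of $x_{n}$ agrees with that of $x$ up to depth $K$ for $n$ large, and combined with the contraction lemma this concludes the proof. The main obstacle is indeed this contraction lemma, since controlling the net shift in $q$-valuation through alternating insertions of $[a_{i}]_{q}$ and $[a_{i}]_{q^{-1}}$ requires tracking both positive and negative power contributions uniformly.
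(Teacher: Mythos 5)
This theorem is not proved in the paper: it is imported from \cite{MGOexp} (which treats the case $x>1$) together with \cite{TheseLu} for the extension to all reals, so there is no internal proof to match your attempt against. That said, your reduction is exactly the intended route: shifting by an integer $k$ with $x+k>1$ and using the rational-number identity $[y+n]_{q}=q^{n}[y]_{q}+[n]_{q}$ (relation \eqref{relrec}, applied only to the rationals $x_{n}$, so there is no circularity with Theorem \ref{relx}) transfers stabilization from $[x_{n}+k]_{q}$ to $[x_{n}]_{q}$, and your argument for (iii) via Proposition \ref{R/S}(iii) and the fact that an irrational lies in the interior of one of the defining intervals is correct.

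The genuine gap is in your sketch of the substantive $x>1$ stabilization, precisely where you admit the difficulty lies. Your claimed mechanism --- that each nesting level contributes $q^{\pm a_{i}}$ and the alternation forces the valuation of the perturbation to grow --- does not close as stated. Tracking the first-order perturbation $\delta F_{i}\approx -q^{\pm a_{i}}F_{i+1}^{-2}\,\delta F_{i+1}$ through one odd/even pair of levels, and using that $[a]_{q}$ has valuation $0$ while $[a]_{q^{-1}}$ has valuation $-(a-1)$, the net gain in $q$-adic valuation over the pair is $a_{i}+a_{i+1}-2$, which is \emph{zero} when $a_{i}=a_{i+1}=1$. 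So for expansions with long runs of partial quotients equal to $1$ (e.g.\ the golden ratio $[1,1,1,\ldots]$) your induction gives $f(K)$ bounded, not $f(K)\to+\infty$. Making this work requires the finer analysis of \cite{MGOexp} (a sharper lower bound on the valuation gain over blocks of levels, not level-by-level), and this is exactly the nontrivial content of the cited theorem; as written, your contraction lemma is an assertion rather than a proof. A secondary, smaller point: "the continued fraction of $x_{n}$ agrees with that of $x$ up to depth $K$ for large $n$" needs the cylinder-set argument plus the observation that a rational has two expansions, only one of which need begin with $a_{1},\ldots,a_{K}$; this is harmless here because Theorem \ref{genfrac} makes $[\,\cdot\,]_{q}$ independent of the chosen expansion, but it should be said.
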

This allows the following definition.
\begin{defn}\cite{MGOexp}
The $q$-deformation of an irrational number $x$
is the Laurent series
$$
[x]_{q}:=  \sum \limits_{k=-N}^{+\infty} \chi_{k} q^k.
$$
where $N\in \Z_{\geq 0}$ and  $\chi_{k}\in \Z$ are given by Theorem \ref{stab}.
\end{defn}

\begin{ex}
Theorem \ref{stab} says that if two rational numbers are close to each other (i.e. the difference is close to 0) then their $q$-deformations are close to each other (i.e. the Taylor series expansions coincide up to some order). For instance if one considers the rationals $\frac{12}{5}=2.4$, $\frac{241}{100}=2.41$ and $\frac{408}{169}\approx2.41420$ their $q$-deformations are quite different as rational fractions but their Taylor expansions coincide up to order 7.
One has 
\begin{eqnarray*}
\left[\frac{12}{5}\right]_{q}&=&
\frac{1+2q+3q^{2}+3q^{3}+2q^{4}+q^{5}}{1+q+2q^{2}+q^{3}}\\[1pt]
&=&1+q+q^{4}-2q^{6}+q^{7}+3q^{8}-3q^{9}-4q^{10}+7q^{11}+4q^{12}+\ldots \\[10pt]
\left[\frac{241}{100}\right]_{q}&=&\frac{q^{12} + 4q^{11} + 10q^{10} + 20q^9 + 29q^8 + 37q^7 + 40q^6 + 37q^5 + 29q^4 + 19q^3 + 10q^2 + 4q + 1}{q^{10} + 4q^9 + 8q^8 + 13q^7 + 17q^6 + 18q^5 + 16q^4 + 12q^3 + 7q^2 + 3q + 1}\\[1pt]
&=&1+q+q^{4}-2q^{6}+q^{7}+3q^{8}-2q^{9}-7q^{10}+9q^{11}+7q^{12}-17q^{13}+\cdots\\[10pt]
  \left[ \frac{408}{169}\right]_{q}&= &\frac{1 + 4q+ 12q^{2} + 25q^{3} + 41q^4 + 56q^5 + 65q^6 + 65q^{7}+56q^{8}+41q^{9}+25q^{10}+12q^{11}+ 4q^{12}  + q^{13}}{1 + 3q+ 9q^{2} + 16q^{3} + 24q^4 + 29q^5 + 29q^6 + 24q^{7}+16q^{8}+9q^{9} + 4q^{10}  + q^{11}} \\[5pt]
&=&1+q+q^{4}-2q^{6}+q^{7}+4q^{8}-5q^{9}-7q^{10}+18q^{11}+7q^{12}-55q^{13}+18q^{14}+146q^{15}-156q^{16}\ldots
  \end{eqnarray*}
These numbers are close to $1+\sqrt2\approx2.41421$. It turns out that
$\left[1+\sqrt2\right]_{q}  $ can be computed explicitly (see Section \ref{exquad}) and one obtains
\begin{eqnarray*}
[1+\sqrt{2}]_{q}& = &\cfrac {q^{3}+2q-1+ \sqrt{q^{6}+4q^{4}-2q^{3}+4q^{2}+1}}{2q}\\
&=&1+q+q^{4}-2q^{6}+q^{7}+4q^{8}-5q^{9}-7q^{10}+18q^{11}+7q^{12}-55q^{13}+18q^{14}+146q^{15}-155q^{15}\ldots
\end{eqnarray*}
which coincide up to order 16 with $ \left[ \frac{408}{169}\right]_{q}$.
\end{ex}

\subsection{Proof of Theorem \ref{relx}}
From the explicit formula \eqref{qregfrac} 
one can easily check the relations of Theorem~\ref{relx} in the case where $x$ is a rational. More precisely one gets

\begin{prop}
For $x\in \Q$, $n\in \Z$, one has
\begin{eqnarray}\label{relrec}
[x+n]_{q}&=&q^{n}[x]_{q}+[n]_{q},\\[6pt]
\label{relneg}
[-x]_{q}&=&-q^{-1}[x]_{q^{-1}},\\[6pt]
\label{relinv}
\left[\frac{1}{x}\right]_{q}&=&\frac{1}{[x]_{q^{-1}}}.
\end{eqnarray}

\end{prop}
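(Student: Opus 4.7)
The plan is to verify the three identities by direct manipulation of the explicit formula \eqref{qregfrac}, invoking Theorem \ref{genfrac} to allow continued fraction expansions with arbitrary integer coefficients (which is essential for \eqref{relneg} and \eqref{relinv}). For \eqref{relrec}, I write $x = [a_{1}, a_{2}, \ldots, a_{2m}]$, so $x + n = [a_{1} + n, a_{2}, \ldots, a_{2m}]$; the two expansions agree except in the outermost layer. Using the classical identity $[a + b]_{q} = [a]_{q} + q^{a}[b]_{q}$ I obtain $[a_{1} + n]_{q} = [n]_{q} + q^{n}[a_{1}]_{q}$, while the outer numerator becomes $q^{a_{1} + n} = q^{n}\cdot q^{a_{1}}$. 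Factoring $q^{n}$ out of the entire fractional part yields $[x + n]_{q} = [n]_{q} + q^{n}[x]_{q}$ immediately.

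For \eqref{relneg}, I start from the expansion $-x = [-a_{1}, -a_{2}, \ldots, -a_{2m}]$ together with the elementary identities $[-n]_{q} = -q^{-1}[n]_{q^{-1}}$ and $[-n]_{q^{-1}} = -q[n]_{q}$. Substituting into \eqref{qregfrac}, every $q$-integer at depth $k$ acquires a minus sign together with a swap $q \leftrightarrow q^{-1}$, and every numerator $q^{\pm a_{k}}$ becomes $q^{\mp a_{k}}$. I then proceed by induction on the depth from the innermost term outward, maintaining the invariant that the subexpression starting at depth $k$ in $[-x]_{q}$ equals $-q^{-1}$ times (if $k$ is odd) or $-q$ times (if $k$ is even) the corresponding subexpression in $[x]_{q^{-1}}$. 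The base case $k = 2m$ follows from $[-a_{2m}]_{q^{-1}} = -q[a_{2m}]_{q}$, and the inductive step is a direct simplification of the nested fraction: at each peeling one factors $-q$ or $-q^{-1}$ out of the numerator $q^{\pm a_{k}}$, absorbs it into the outer layer, and uses the inductive hypothesis. Applied to $k = 1$ this gives $[-x]_{q} = -q^{-1}[x]_{q^{-1}}$.

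For \eqref{relinv}, I expand $1/x = [0, a_{1}, a_{2}, \ldots, a_{2m}]$. The leading term $[0]_{q} = 0$ and the numerator $q^{0} = 1$ collapse the outermost layer to $1/D$, where $D$ is the remaining denominator. Since prepending $0$ shifts the parity of every subsequent position by one, each $[a_{k}]$ now appears with the opposite power of $q$ compared to the expansion of $[x]_{q}$; a term-by-term comparison with the expansion obtained by substituting $q \mapsto q^{-1}$ in \eqref{qregfrac} for $[x]_{q}$ shows $D = [x]_{q^{-1}}$. The odd length $2m + 1$ is not a real obstacle, since by Theorem \ref{genfrac} we may equivalently replace $[0, a_{1}, \ldots, a_{2m}]$ by the even-length expansion $[0, a_{1}, \ldots, a_{2m} - 1, 1]$ when $a_{2m} > 1$, or by an analogous adjustment otherwise. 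I expect the main technical nuisance to be the sign and power-of-$q$ bookkeeping in the induction for \eqref{relneg}; the other two relations reduce to routine substitution in \eqref{qregfrac}.
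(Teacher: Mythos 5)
Your proposal is correct and follows essentially the same route as the paper, which simply asserts that the three relations ``can easily be checked from the explicit formula \eqref{qregfrac}''; you supply the verification the paper omits, using the canonical expansion for \eqref{relrec}, the negated expansion $[-a_1,\ldots,-a_{2m}]$ with the identities $[-n]_q=-q^{-1}[n]_{q^{-1}}$ for \eqref{relneg}, and the prepended-zero expansion for \eqref{relinv}, with Theorem \ref{genfrac} legitimizing the non-canonical expansions (and no circularity arises, since the paper's proof of Theorem \ref{genfrac} does not rely on this proposition). The bookkeeping in your induction for \eqref{relneg} and the parity shift in \eqref{relinv} both check out.
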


 The relations \eqref{relneg} and \eqref{relinv} do not make sense anymore for real numbers, as they involved comparison of series in $q$ and in $q^{-1}$, however their combination in the form \eqref{relx2} does make sense. The relations \eqref{relx1} and \eqref{relx2} holding in the case of $q$-rationals will be preserved in the limit process defining the $q$-reals.

\subsection{Infinite continued fractions}\label{infCF}
It is well known that every irrational number can be written with infinite continued fraction expansions. For every irrational number $x$ there exist 
a sequence of integers ${(a_i)}_{i \ge 1}$ with $a_i \ge 1$ for every $i \ge 2$ such that the sequence of rationals $\llbracket{}a_1, \ldots, a_n\rrbracket$ converges to $x$. One writes
$$
x=
a_1 + \cfrac{1}{a_2
          + \cfrac{1}{a_{3}
          +\cfrac{1}{a_{4}
          + \cfrac{1}{
        {\ddots}}
          } }} .
$$
and uses the notation $x=[a_{1}, a_2, a_3 \ldots]$. 
By Theorem \ref{stab}, the sequence of rational fractions $\llbracket{}a_1, \ldots, a_{2m}\rrbracket_{q}$ converges to the formal power series $[x]_{q}$. One writes
$$
[a_{1}, a_2, a_3 \ldots]_{q}:=
[a_1]_{q} + \cfrac{q^{a_{1}}}{[a_2]_{q^{-1}} 
          + \cfrac{q^{-a_{2}}}{[a_{3}]_{q} 
          +\cfrac{q^{a_{3}}}{[a_{4}]_{q^{-1}}
          + \cfrac{q^{-a_{4}}}{
        {\ddots}}
          } }} .
$$
and uses the notation $[x]_{q}=[a_{1}, a_2, a_3 \ldots]_{q}$. 

Similarly with negative continued fractions, for every irrational number $x$ there exist 
a sequence of integers ${(c_i)}_{i \ge 1}$ with $c_i \ge 2$ for all $i \ge 2$ such that the sequence of rationals $\llbracket{}c_1, \ldots, c_n\rrbracket$ converges to $x$. One writes
\begin{equation*}
x
=c_1 - \cfrac{1}{c_2 
          - \cfrac{1}{c_{3} -
           \cfrac{1}{\ddots} }}
\end{equation*}
and uses the notation $x=\llbracket{}c_1, c_{2}, \ldots\rrbracket$. 

By Theorem \ref{stab}, the sequence of rational fractions $\llbracket{}c_1, \ldots, c_n\rrbracket_{q}$ converges to the formal power series $[x]_{q}$. One writes
$$
[x]_{q}=
[c_1]_{q} - \cfrac{q^{c_{1}-1}}{[c_2]_{q} 
          - \cfrac{q^{c_{2}-1}}{
          [c_{3}]_{q}- \cfrac{q^{c_{3}-1}}{
          \ddots }}} 
$$
and uses also the notation
$[x]_{q}=\llbracket{}c_1,c_2,c_3 \ldots\rrbracket_{q}$.

      \section{$q$-deformations of matrices and of the modular group}\label{secgroup}

In this section we define the $q$-deformation of a matrix $M\in \SL(2,\Z)$ as an element
in $\PGL(2, \Z[q^{\pm 1}])$.
\subsection{Elementary matrices in $\SL(2,\Z)$}
First we introduce elementary matrices in $\SL(2,\Z)$ and recall some standard decomposition.
The elementary matrices
$$
R=\left(
\begin{array}{cc}
1&1\\[4pt]
0&1
\end{array}
\right),
\qquad
S=\left(
\begin{array}{cc}
0&-1\\[4pt]
1&0
\end{array}
\right),
$$
are one of the standard choice of generators of the group~$\SL(2,\Z)$. They satisfy the following relations
$$
(RS)^{3}=-\Id, \qquad S^{2}=-\Id
$$
We will always consider the matrices up to a sign, i.e. as elements of $\PSL(2,\Z)=\SL(2,\Z)/\{\pm\Id\}$. We still denote by $R$ and $S$ the images of $R$ and $S$ in  $\PSL(2,\Z)$.
 It will be convenient to also consider the elementary matrix
$$
L:=\left(
\begin{array}{cc}
1&0\\[4pt]
1&1
\end{array}
\right).
$$
The couple $(R,L)$ is also a standard choice of generators of $\SL(2,\Z)$. 

\subsection{$q$-deformation of the modular group (proof of Proposition \ref{iso})}
We consider the ring of Laurent polynomials with integer coefficients $\Z[q^{\pm 1}]$ and its group of units $\U:=\Z[q^{\pm 1}]^{\times}=\{\pm q^{N}, N\in \Z\}$. We consider the following groups of $2\times 2$-matrices
\begin{eqnarray*}
\GL(2, \Z[q^{\pm 1}])&=&\left\{
\begin{pmatrix} 
A&B\\
C&D
\end{pmatrix} \left|  A, B, C, D \in \Z[q^{\pm 1}] \right.: AD-BC\in \U\right\},\\[8pt]
\PGL(2, \Z[q^{\pm 1}])&=&
\GL(2, \Z[q^{\pm 1}])/ \U.
\end{eqnarray*}
As in \cite{MGOfmsigma} one introduces the following matrices
$$
R_{q}:=
\begin{pmatrix}
q&1\\[4pt]
0&1
\end{pmatrix},
\qquad
S_{q}:=
\begin{pmatrix}
0&-q^{-1}\\[4pt]
1&0
\end{pmatrix},
$$
that are $q$-analogues of $R$ and $S$
and which satisfy $
(R_{q}S_{q})^{3}=-\Id, \; S_{q}^{2}=-q^{-1}\Id
$. 

One
considers the subgroup generated by the classes of $R_{q}$ and $S_{q}$ inside $ \PGL(2, \Z[q^{\pm 1}])$. One defines
$$
\PG:=\langle R_{q}, S_{q}\rangle \subset \PGL(2, \Z[q^{\pm 1}]).
$$
In $\PG$ one has the relations $
(R_{q}S_{q})^{3}= S_{q}^{2}=\Id
$, so that the following assignment 
\begin{equation}\label{qmat}
\begin{array}{cccc}
[\,\cdot\,]_{q}:& R & \mapsto  & [R]_{q}=R_{q}  \\
  & S & \mapsto  & [S]_{q}=S_{q}  \\ 
\end{array}
\end{equation}
realizes an isomorphism between  $\PSL(2,\Z)$ and $\PG$.
Proposition \ref{iso} is proved.

One may consider the $q$-deformation $[M]_{q}$ of any matrices $M\in \PSL(2,\Z)$ via the map \eqref{qmat}.

\begin{ex}
Computing the decomposition $L=RSR$ one obtains the $q$-deformed matrix
$$[L]_{q}=\left(
\begin{array}{cc}
q&0\\[4pt]
q&1
\end{array}
\right).$$
\end{ex}
\subsection{M\"obius transformation of the $q$-reals}
The modular group $\PSL(2,\Z)$ acts on the projective real line via M\"obius transformations:
$$
M\cdot x :=\frac{ax+b}{cx+d}, \quad \forall x\in \R\cup\{\infty\}, \quad M=
\begin{pmatrix}
a&b\\c&d
\end{pmatrix}\in \PSL(2,\Z).
$$
We consider the $q$-analogue action of $\PSL(2,\Z)\simeq\PG$ on $\Z[[q]]\cup\{\infty\}$:
$$
M\cdot f :=\frac{Af+B}{Cf+D}, \quad \forall f\in \Z[[q]]\cup\{\infty\}, \quad M=
\begin{pmatrix}
A&B\\C&D
\end{pmatrix}\in \PG.
$$
\begin{prop}\label{action}
The $\PSL(2,\Z)$-actions commute with the $q$-deformations.
\end{prop}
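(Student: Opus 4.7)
The plan is to reduce the general statement to the case of the generators $R$ and $S$ of $\PSL(2,\Z)$, and then to observe that for these two generators the commutation is nothing but a reformulation of Theorem \ref{relx}.

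First I would spell out what the compatibility means: for every $M\in\PSL(2,\Z)$ and every $x\in\R\cup\{\infty\}$ we must check the identity
\[
[M\cdot x]_{q} \;=\; [M]_{q}\cdot [x]_{q}
\]
in $\Z[[q]]\cup\{\infty\}$. Since $[\,\cdot\,]_{q}\colon\PSL(2,\Z)\to\PG$ is a group isomorphism (Proposition \ref{iso}), the right-hand side defines a left action of $\PSL(2,\Z)$ on $\Z[[q]]\cup\{\infty\}$ through the M\"obius action of $\PG$. Both sides are therefore group actions of $\PSL(2,\Z)$, so the set of $M$ for which the identity holds (for all $x$) is a subgroup. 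Consequently it suffices to check the identity on a generating set, and the standard choice $\{R,S\}$ is convenient.

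Next I would specialize to these generators. Reading off the $q$-deformed matrices, the M\"obius actions of $R_{q}$ and $S_{q}$ on a series $f\in\Z[[q]]$ are
\[
R_{q}\cdot f \;=\; qf+1, \qquad S_{q}\cdot f \;=\; -q^{-1}\,\tfrac{1}{f}.
\]
On the real-number side, the actions of $R$ and $S$ are $R\cdot x=x+1$ and $S\cdot x=-1/x$. Hence the required identities for the two generators read
\[
[x+1]_{q}=q\,[x]_{q}+1, \qquad \bigl[-\tfrac{1}{x}\bigr]_{q}=-q^{-1}\,\tfrac{1}{[x]_{q}},
\]
which are exactly the two formulas of Theorem \ref{relx}.

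The only minor subtlety—and the one point I would dwell on—is the well-definedness of the right-hand action on the formal series level: one must allow $\infty$ as a value and make sense of $1/[x]_{q}$ when $[x]_{q}$ is a power series, and also check that composition of the $q$-M\"obius transformations along a word in the generators corresponds to multiplication of the matrices modulo the unit group $\U=\{\pm q^{N}\}$, so that the resulting map $\PG\to\mathrm{Bij}(\Z[[q]]\cup\{\infty\})$ is a well-defined group action. This is routine (the factor $\pm q^{N}$ cancels in the fraction $\tfrac{Af+B}{Cf+D}$), but it is what guarantees that reducing to generators is legitimate. Once this is in place, Theorem \ref{relx} provides both required identities for $R$ and $S$, and the proposition follows.
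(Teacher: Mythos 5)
Your proof is correct and follows exactly the paper's own argument: reduce to the generators $R$ and $S$ (legitimate because both sides define group actions, so the set of $M$ satisfying the identity is a subgroup) and then invoke Theorem \ref{relx}. The extra care you take about well-definedness of the $\PG$-action modulo $\U$ is a reasonable elaboration of what the paper leaves implicit, but the route is the same.
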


\begin{proof}
We want to show that $[M\cdot x]_{q}=[M]_{q}\cdot [x]_{q}$. It suffices to consider the cases $M=R$ and $M=S$, for which we know that the equalities hold by Theorem \ref{relx}.
\end{proof}

\subsection{$q$-deformed matrices and continued fractions}
In this section we study the $q$-deformations of elementary matrices related to continued fractions. 
Let us consider the generalized continued fraction
$$
F_{n}=\quad
x_1 + \cfrac{y_{1}}{x_2 
          + \cfrac{y_{2}}{\ddots +\cfrac{y_{n-1}}{x_{n}} } },
$$
where $x_{i}$'s and $y_{i}$'s are viewd as formal variables.
It is well known, e.g. \cite{Fra}, that it can be obtained by $2\times 2$-matrix computations as below.

\begin{lem}\label{lemgen}
The matrix product
$$
\left(
\begin{array}{cc}
x_1&y_{1}\\[4pt]
1&0
\end{array}
\right)
\left(
\begin{array}{cc}
x_{2}&y_{2}\\[4pt]
1&0
\end{array}
\right)
\cdots
\left(
\begin{array}{cc}
x_{n}&y_{n}\\[4pt]
1&0
\end{array}
\right)=
\left(
\begin{array}{cc}
U_{n}&y_{n}U_{n-1}\\[4pt]
V_{n}&y_{n}V_{n-1}
\end{array}
\right)
$$
gives 
$$
\frac{U_{n}}{V_{n}}=F_{n} .
$$
\end{lem}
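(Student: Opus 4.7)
The plan is to proceed by induction on $n$, letting the recursion for $U_n$ and $V_n$ emerge from the matrix multiplication itself, and then to identify $U_n/V_n$ with $F_n$ via the M\"obius interpretation.

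For the base case I would set $U_0:=1$ and $V_0:=0$, so that at $n=1$ the statement reads
$$
\begin{pmatrix} U_1 & y_1 U_0 \\ V_1 & y_1 V_0 \end{pmatrix} \;=\; \begin{pmatrix} x_1 & y_1 \\ 1 & 0 \end{pmatrix},
$$
forcing $U_1=x_1$, $V_1=1$, and $U_1/V_1 = x_1 = F_1$. For the inductive step, I would assume the displayed formula at stage $n-1$ and multiply on the right by the $n$-th factor. A direct computation shows that the result has exactly the claimed shape, provided one defines
$$
U_n \;:=\; x_n\,U_{n-1} + y_{n-1}\,U_{n-2}, \qquad V_n \;:=\; x_n\,V_{n-1} + y_{n-1}\,V_{n-2};
$$
the second column, $y_n U_{n-1}$ and $y_n V_{n-1}$, then appears automatically from the product.

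To conclude that $U_n/V_n = F_n$, I would invoke the M\"obius action. Each factor $M_i = \begin{pmatrix} x_i & y_i \\ 1 & 0 \end{pmatrix}$ sends $t \in \R\cup\{\infty\}$ to $x_i + y_i/t$, and composition of such transformations corresponds to matrix multiplication (a left action). Applying the rightmost factor first to $\infty$ gives $M_n\cdot\infty = x_n$; then $M_{n-1}\cdot x_n = x_{n-1}+y_{n-1}/x_n$; iterating this reconstructs exactly $F_n$. On the other hand, for the product matrix, evaluation at $\infty$ gives $U_n/V_n$ by definition of the M\"obius action. The two expressions therefore coincide.

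The only real obstacle is bookkeeping: one must fix the boundary values $U_0, V_0$ and check that the direction of the matrix product is compatible with the order in which the M\"obius transformations are composed (rightmost factor acts first on $\infty$). Once these conventions are pinned down, both the matrix identity and the identification with $F_n$ reduce to a one-line verification.
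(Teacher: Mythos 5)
Your proof is correct. Note that the paper itself gives no argument for this lemma --- it is quoted as a classical fact (``It is well known, e.g.\ [Fra] \dots''), so there is nothing to compare against; your induction, with $U_0=1$, $V_0=0$ and the recursions $U_n=x_nU_{n-1}+y_{n-1}U_{n-2}$, $V_n=x_nV_{n-1}+y_{n-1}V_{n-2}$ emerging from right-multiplication by the $n$-th factor, combined with the observation that evaluating the M\"obius action of the product at $\infty$ (rightmost factor first, $M_n\cdot\infty=x_n$) rebuilds $F_n$ from the inside out while the product matrix sends $\infty$ to $U_n/V_n$, is exactly the standard proof and is complete.
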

We introduce the matrices corresponding to the continued fraction expansions of types \eqref{regfrac} and \eqref{negfrac}:
\begin{equation}\label{matMM+}
\begin{array}{rcl}
M^+(a_1,\ldots,a_{2m}) &:=&
\left(
\begin{array}{cc}
a_1&1\\[4pt]
1&0
\end{array}
\right)
\left(
\begin{array}{cc}
a_2&1\\[4pt]
1&0
\end{array}
\right)
\cdots
\left(
\begin{array}{cc}
a_{2m}&1\\[4pt]
1&0
\end{array}
\right),
\\[16pt]
M(c_1,\ldots,c_k) &:=&
\left(
\begin{array}{cc}
c_1&-1\\[4pt]
1&0
\end{array}
\right)
\left(
\begin{array}{cc}
c_{2}&-1\\[4pt]
1&0
\end{array}
\right)
\cdots
\left(
\begin{array}{cc}
c_k&-1\\[4pt]
1&0
\end{array}
\right).
\end{array}
\end{equation}
defined for arbitrary sequences of integers $a_{1}, \ldots, a_{2m}$ and $ c_1,c_2,\ldots, c_{k}$.

We want to apply the map \eqref{qmat} to these matrices. To do so we write the matrices as product of the generators $R$ and $S$ or $R$ and $L$.
By direct computations one checks the following lemma.
\begin{lem}
The matrix decompositions in terms of the generators are
$$
\begin{array}{rcl}
M^+(a_1,\ldots,a_{2m})&=&
R^{a_1}L^{a_{2}}
R^{a_{3}}L^{a_{4}}\cdots{}
R^{a_{2m-1}}L^{a_{2m}},
\\[4pt]
M(c_1,\ldots,c_k)&=&R^{c_1}S\,R^{c_{2}}S\cdots{}R^{c_k}S.
\end{array}
$$
\end{lem}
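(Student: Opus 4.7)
The plan is to reduce both identities to a single elementary $2\times 2$ matrix computation in each case, and then to iterate. No induction is truly needed, just a direct multiplication.

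For the second identity, I would first observe the basic factorization
$$
\begin{pmatrix} c & -1 \\ 1 & 0 \end{pmatrix}
=
\begin{pmatrix} 1 & c \\ 0 & 1 \end{pmatrix}
\begin{pmatrix} 0 & -1 \\ 1 & 0 \end{pmatrix}
= R^{c}\, S,
$$
which is a direct verification using $R^{c}=\begin{pmatrix}1&c\\0&1\end{pmatrix}$. Plugging this into the definition of $M(c_{1},\ldots,c_{k})$ in \eqref{matMM+} immediately gives
$M(c_{1},\ldots,c_{k})=R^{c_{1}}S\,R^{c_{2}}S\cdots R^{c_{k}}S$.

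For the first identity the situation is almost as simple, but the analogous one-factor factorization involves the matrix $\begin{pmatrix}0&1\\1&0\end{pmatrix}$, which is not in $\SL(2,\Z)$. I would therefore group the elementary factors in consecutive pairs and check the identity
$$
\begin{pmatrix} a & 1 \\ 1 & 0 \end{pmatrix}
\begin{pmatrix} b & 1 \\ 1 & 0 \end{pmatrix}
=
\begin{pmatrix} ab+1 & a \\ b & 1 \end{pmatrix}
= R^{a}\,L^{b},
$$
where the last equality is verified from $R^{a}=\begin{pmatrix}1&a\\0&1\end{pmatrix}$, $L^{b}=\begin{pmatrix}1&0\\b&1\end{pmatrix}$. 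Since the number $2m$ of factors in $M^{+}(a_{1},\ldots,a_{2m})$ is even, this pairing trick covers all factors, and yields
$$
M^{+}(a_{1},\ldots,a_{2m})
=\prod_{j=1}^{m}\Bigl(R^{a_{2j-1}}L^{a_{2j}}\Bigr)
=R^{a_{1}}L^{a_{2}}R^{a_{3}}L^{a_{4}}\cdots R^{a_{2m-1}}L^{a_{2m}},
$$
as desired.

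There is really no obstacle here: the whole content is the two elementary identities $\begin{pmatrix}c&-1\\1&0\end{pmatrix}=R^{c}S$ and $\begin{pmatrix}a&1\\1&0\end{pmatrix}\begin{pmatrix}b&1\\1&0\end{pmatrix}=R^{a}L^{b}$. The only mild subtlety worth flagging is that in the first identity one must use an even number of factors in order to stay within $\SL(2,\Z)$; this is automatic from the hypothesis that the continued fraction has $2m$ partial quotients (and corresponds to the choice of parity built into \eqref{regfrac}).
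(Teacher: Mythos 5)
Your proposal is correct and takes essentially the same route as the paper, which simply states that the lemma ``can be checked by direct computations'': the two identities $R^{c}S=\left(\begin{smallmatrix}c&-1\\1&0\end{smallmatrix}\right)$ and $R^{a}L^{b}=\left(\begin{smallmatrix}a&1\\1&0\end{smallmatrix}\right)\left(\begin{smallmatrix}b&1\\1&0\end{smallmatrix}\right)$ that you verify are exactly those computations. Your remark that the factors of $M^{+}$ must be paired because a single factor $\left(\begin{smallmatrix}a&1\\1&0\end{smallmatrix}\right)$ has determinant $-1$ is a sensible way to organize the argument, and nothing further is needed.
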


Applying the map \eqref{qmat} one obtains the expressions for the $q$-deformed matrices.

\begin{lem} One has

(i)
$[M(c_{1},\ldots, c_{k})]_{q}=
\begin{pmatrix}
[c_{1}]_{q}&-q^{c_{1}-1}\\[6pt]
1&0
\end{pmatrix}
\begin{pmatrix}
[c_{2}]_{q}&-q^{c_{2}-1}\\[6pt]
1&0
\end{pmatrix}
\cdots
\begin{pmatrix}
[c_{k}]_{q}&-q^{c_{k}-1}\\[10pt]
1&0
\end{pmatrix};
$

(ii) $[M^{+}(a_{1},\ldots, a_{2m})]_{q}$\\

$
\begin{array}{rcc}
&=&
\begin{pmatrix}
q^{a_{1}}&[a_{1}]_{q}&\\[6pt]
0&1
\end{pmatrix}
\begin{pmatrix}
q^{a_{2}}&0\\[6pt]
q[a_{2}]_{q}& 1
\end{pmatrix}
\cdots
\begin{pmatrix}
q^{a_{2m-1}}&[a_{2m-1}]_{q}&\\[6pt]
0&1
\end{pmatrix}
\begin{pmatrix}
q^{a_{2m}}&0\\[6pt]
q[a_{2m}]_{q}& 1
\end{pmatrix},
\\[20pt]
&=&
  \begin{pmatrix}
[a_{1}]_{q}&q^{a_{1}}\\[6pt]
1&0
\end{pmatrix}
\begin{pmatrix}
q[a_{2}]_{q}& 1\\[6pt]
q^{a_{2}}&0
\end{pmatrix}
\cdots
\begin{pmatrix}
[a_{2m-1}]_{q}&q^{a_{2m-1}}\\[6pt]
1&0
\end{pmatrix}
\begin{pmatrix}
q[a_{2m}]_{q}& 1\\[6pt]
q^{a_{2m}}&0
\end{pmatrix},\\[20pt]
&=&q^{a_{2}+a_{4}+\ldots+a_{2m}}
\begin{pmatrix}
[a_{1}]_{q}&q^{a_{1}}\\[6pt]
1&0
\end{pmatrix}
\begin{pmatrix}
[a_{2}]_{q^{-1}}& q^{-a_{2}}\\[6pt]
1&0
\end{pmatrix}
\cdots
\begin{pmatrix}
[a_{2m-1}]_{q}&q^{a_{2m-1}}\\[6pt]
1&0
\end{pmatrix}
\begin{pmatrix}
[a_{2m}]_{q^{-1}}&q^{-a_{2m}}\\[6pt]
1&0
\end{pmatrix}.
\\[20pt]
\end{array}
$
\end{lem}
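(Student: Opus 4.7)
The plan is to apply the group isomorphism $[\,\cdot\,]_q\colon\PSL(2,\Z)\to\PG$ of \eqref{qmat} term-by-term to the decompositions
$$M^{+}(a_1,\ldots,a_{2m})=R^{a_1}L^{a_2}\cdots R^{a_{2m-1}}L^{a_{2m}},\qquad M(c_1,\ldots,c_k)=R^{c_1}S\cdots R^{c_k}S$$
established in the previous lemma. Because $[\,\cdot\,]_q$ is a morphism, one has
$$[M^{+}(a_1,\ldots,a_{2m})]_q=R_q^{a_1}L_q^{a_2}\cdots R_q^{a_{2m-1}}L_q^{a_{2m}},\qquad [M(c_1,\ldots,c_k)]_q=R_q^{c_1}S_q\cdots R_q^{c_k}S_q,$$
so the whole problem reduces to computing the powers of $R_q$ and $L_q$ and then re-grouping the factors.

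The first step would be to establish by a direct induction on $n$ the closed forms
$$R_q^n=\begin{pmatrix}q^n&[n]_q\\0&1\end{pmatrix},\qquad L_q^n=\begin{pmatrix}q^n&0\\q[n]_q&1\end{pmatrix},$$
where the base case for $L_q$ comes from the computation $L_q=R_qS_qR_q=\bigl(\begin{smallmatrix}q&0\\q&1\end{smallmatrix}\bigr)$ already recorded above, and the inductive step uses the elementary identity $q^{n+1}+q[n]_q=q[n+1]_q$. Part~(i) then follows by computing
$$R_q^{c_i}S_q=\begin{pmatrix}q^{c_i}&[c_i]_q\\0&1\end{pmatrix}\begin{pmatrix}0&-q^{-1}\\1&0\end{pmatrix}=\begin{pmatrix}[c_i]_q&-q^{c_i-1}\\1&0\end{pmatrix}$$
and substituting into the product. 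The first of the three expressions in part~(ii) is immediate from the closed forms for $R_q^{a_{2i-1}}$ and $L_q^{a_{2i}}$.

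For the second expression in (ii), I would insert copies of the identity $J^2=\Id$ with $J=\bigl(\begin{smallmatrix}0&1\\1&0\end{smallmatrix}\bigr)$ between each consecutive pair of factors; right-multiplication by $J$ of an $R_q^{a_{2i-1}}$-factor and left-multiplication by $J$ of an $L_q^{a_{2i}}$-factor produce exactly the alternating matrices $\bigl(\begin{smallmatrix}[a_{2i-1}]_q&q^{a_{2i-1}}\\1&0\end{smallmatrix}\bigr)$ and $\bigl(\begin{smallmatrix}q[a_{2i}]_q&1\\q^{a_{2i}}&0\end{smallmatrix}\bigr)$ appearing in the statement. To pass to the third expression, I would factor a scalar $q^{a_{2i}}$ out of every even-indexed matrix, producing the prefactor $q^{a_2+a_4+\cdots+a_{2m}}$. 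The bookkeeping identity that makes this work is
$$q^{1-a}[a]_q=[a]_{q^{-1}},$$
which is immediate from $[a]_q=1+q+\cdots+q^{a-1}$ and $[a]_{q^{-1}}=1+q^{-1}+\cdots+q^{-(a-1)}$, and which reconciles the top-left entry after the scalar has been extracted.

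All the ingredients are routine matrix computations; the only mildly delicate point is the identity $q^{1-a}[a]_q=[a]_{q^{-1}}$, which is precisely what forces the index $q^{-1}$ to appear at every even position of the third expression and aligns it with the classical shape of the $q$-continued fraction~\eqref{qregfrac}.
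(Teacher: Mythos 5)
Your proposal is correct and takes essentially the same route as the paper, which simply applies the morphism $[\,\cdot\,]_{q}$ of \eqref{qmat} to the decompositions $M^{+}(a_1,\ldots,a_{2m})=R^{a_1}L^{a_2}\cdots R^{a_{2m-1}}L^{a_{2m}}$ and $M(c_1,\ldots,c_k)=R^{c_1}S\cdots R^{c_k}S$ and leaves the resulting matrix computations implicit. Your closed forms for $R_q^{n}$ and $L_q^{n}$, the insertion of $J^2=\Id$, and the rescaling via $q^{1-a}[a]_q=[a]_{q^{-1}}$ supply exactly those omitted details correctly (and they extend verbatim to negative exponents, as needed since the $a_i$, $c_i$ are arbitrary integers).
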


The matrices in the above lemma are elements in $\PG$. It will be convenient to fix representatives in $\GL(2, \Z[q^{\pm 1}])$. We define
\begin{equation}
\label{qNegMat}
M_{q}(c_{1},\ldots, c_{k}):=
\begin{pmatrix}
[c_{1}]_{q}&-q^{c_{1}-1}\\[6pt]
1&0
\end{pmatrix}
\begin{pmatrix}
[c_{2}]_{q}&-q^{c_{2}-1}\\[6pt]
1&0
\end{pmatrix}
\cdots
\begin{pmatrix}
[c_{k}]_{q}&-q^{c_{k}-1}\\[6pt]
1&0
\end{pmatrix}
\end{equation}
as a $q$-analogue of the matrix $M(c_{1},\ldots, c_{k})$ in $\GL(2, \Z[q^{\pm 1}])$, and
\begin{equation}
\label{qRegMat}
M^{+}_{q}(a_{1},\ldots, a_{2m}):=
\begin{pmatrix}
[a_{1}]_{q}&q^{a_{1}}\\[6pt]
1&0
\end{pmatrix}
\begin{pmatrix}
[a_{2}]_{q^{-1}}& q^{-a_{2}}\\[6pt]
1&0
\end{pmatrix}
\cdots
\begin{pmatrix}
[a_{2m-1}]_{q}&q^{a_{2m-1}}\\[6pt]
1&0
\end{pmatrix}
\begin{pmatrix}
[a_{2m}]_{q^{-1}}&q^{-a_{2m}}\\[6pt]
1&0
\end{pmatrix}
\end{equation}
as a $q$-analogue of the matrix  $M^{+}(a_{1},a_{2}, \ldots, a_{2m})$  in $\GL(2, \Z[q^{\pm 1}])$.

Using Lemma \ref{lemgen} we immediately see that the above matrices 
$M_{q}(c_{1},\ldots, c_{k})$ and $M^{+}_{q}(a_{1},\ldots, a_{2m})$ correspond to the 
$q$-deformed continued fractions of the types \eqref{qregfrac} and \eqref{qnegfrac}.
More precisely, one obtains:

\begin{lem}\label{keylem}
(i) Let $a_{1}, \ldots, a_{2m}$ be a sequence of integers such that $[a_{1}, \ldots, a_{2m}]_{q}$ given by \eqref{qregfrac} is a well defined rational function in $q$. One has
$$
[a_{1}, \ldots, a_{2m}]_{q}=\frac{\Rc^{+}}{\Sc^{+}},
$$
where ${\Rc^{+}}$ and ${\Sc^{+}}$ are polynomials in $\Z[q^{\pm 1}]$ given by the matrix
$$
M^{+}_{q}(a_{1},\ldots, a_{2m})=\begin{pmatrix}
{\Rc^{+}}&*\\[6pt]
\Sc^{+}&*
\end{pmatrix}.
$$

(ii) Let $c_{1}, \ldots, c_{k}$ be a sequence of integers such that $\llbracket{}c_1,c_2,\ldots, c_{k}\rrbracket{}_{q}$ given by \eqref{qregfrac} is a well defined rational function in $q$. One has
$$
\llbracket{}c_1,c_2,\ldots, c_{k}\rrbracket{}_{q}=\frac{\Rc^{}}{\Sc^{}},
$$
where ${\Rc^{}}$ and ${\Sc^{}}$ are polynomials in $\Z[q^{\pm 1}]$ given by the matrix
$$
M_{q}(c_1,c_2,\ldots, c_{k})=\begin{pmatrix}
{\Rc^{}}&*\\[6pt]
\Sc^{}&*
\end{pmatrix}.
$$

\end{lem}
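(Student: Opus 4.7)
My plan is to view both parts as immediate applications of Lemma \ref{lemgen}, reading off the specializations of the formal variables $x_i$ and $y_i$ from the definitions \eqref{qNegMat} and \eqref{qRegMat}.

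For part (ii), the matrix $M_{q}(c_{1},\ldots,c_{k})$ is already presented in \eqref{qNegMat} as a product of $k$ matrices of the shape $\bigl(\begin{smallmatrix} x_i & y_i \\ 1 & 0 \end{smallmatrix}\bigr)$, with the specializations $x_i = [c_i]_{q}$ and $y_i = -q^{c_i-1}$. Lemma \ref{lemgen} then tells us that the ratio of the two entries in the first column of this product equals the generalized continued fraction $F_k$ built from these $x_i$ and $y_i$; but that continued fraction is exactly the right-hand side of \eqref{qnegfrac}, i.e. $\llbracket c_{1},\ldots,c_{k}\rrbracket_{q}$. Writing $M_{q}(c_1,\ldots,c_k)=\bigl(\begin{smallmatrix}\Rc & *\\ \Sc & *\end{smallmatrix}\bigr)$ then gives $\Rc/\Sc = \llbracket c_1,\ldots,c_k\rrbracket_q$, as required.

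For part (i), I would apply the same recipe to the product \eqref{qRegMat}, now with $x_i$ alternating between $[a_i]_{q}$ (for odd $i$) and $[a_i]_{q^{-1}}$ (for even $i$), and $y_i$ alternating between $q^{a_i}$ and $q^{-a_i}$ accordingly. Lemma \ref{lemgen} then reconstructs exactly the nested fraction appearing in the definition \eqref{qregfrac} of $[a_{1}, \ldots, a_{2m}]_{q}$, and the conclusion follows by reading off the first column of $M^{+}_{q}(a_{1},\ldots, a_{2m})$.

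The only subtlety worth mentioning is the well-definedness hypothesis: one needs $\Sc \neq 0$ (resp.\ $\Sc^{+} \neq 0$) in $\Z[q^{\pm 1}]$ in order for the ratio $\Rc/\Sc$ to be an honest rational function. This is automatic from the assumption that the continued fraction defines a rational function, since the inductive evaluation of \eqref{qnegfrac} (resp.\ \eqref{qregfrac}) from the bottom up never divides by zero precisely when the partial matrix products from the right have non-zero $(1,1)$-entry at every step; this non-vanishing is exactly the non-vanishing of the relevant $\Sc$. Thus there is really no obstacle: the content of the lemma is just the recognition that the matrices \eqref{qNegMat} and \eqref{qRegMat} are the matrix-incarnations of the continued fractions \eqref{qnegfrac} and \eqref{qregfrac} via the classical identity of Lemma \ref{lemgen}.
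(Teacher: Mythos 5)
Your proof is correct and follows the paper's own route exactly: the paper likewise derives Lemma \ref{keylem} by observing that the products \eqref{qNegMat} and \eqref{qRegMat} are instances of Lemma \ref{lemgen} with the specializations $x_i=[c_i]_q$, $y_i=-q^{c_i-1}$ (resp.\ the alternating $[a_i]_{q^{\pm 1}}$, $q^{\pm a_i}$), so that the first-column ratio reproduces \eqref{qnegfrac} (resp.\ \eqref{qregfrac}). Your added remark on well-definedness and the non-vanishing of $\Sc$, $\Sc^{+}$ is a correct clarification that the paper leaves implicit.
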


\begin{rem}
Lemma \ref{keylem} was proved in \cite[Prop 4.3]{MGOfmsigma} for a particular choice of coefficients $a_{i}$'sand $c_{i}$'s.  The more general result of Lemma \ref{keylem} is the key step for the proof of 
Theorem \ref{genfrac}.
\end{rem}

\subsection{Proof of Theorem \ref{genfrac}} \label{genfracpf}
Let $\frac{r}{s}$ be a rational defined from sequences of integers $a_{1}, \ldots, a_{2m}$ and $c_{1}, \ldots, c_{k}$ by the continued fractions \eqref{regfrac} and \eqref{negfrac} respectively, i.e. $\frac{r}{s}=[a_{1}, \ldots, a_{2m}]=\llbracket{}c_1,c_2,\ldots, c_{k}\rrbracket{}$.
We consider the $q$-deformed continued fractions \eqref{qregfrac} and \eqref{qnegfrac} and write
$$
[a_{1}, \ldots, a_{2m}]_{q}=\frac{\Rc^{+}}{\Sc^{+}},\qquad 
\llbracket{}c_1,c_2,\ldots, c_{k}\rrbracket{}_{q}=\frac{\Rc^{}}{\Sc^{}}.
$$
We want to show $
[a_{1}, \ldots, a_{2m}]_{q}=
\llbracket{}c_1,c_2,\ldots, c_{k}\rrbracket{}_{q}$.

By Lemma \ref{lemgen} we know that the ratio of the first columns of the matrices $M^{+}(a_{1},\ldots, a_{2m})$ and $M(c_1,c_2,\ldots, c_{k})$ defined in \eqref{matMM+} gives the rational $\frac{r}{s}$.
Since the matrices belong to $\SL(2, \Z)$ and the rational $\frac{r}{s}$ is written in the irreducible form, we can write 
$$
M^{+}(a_{1},\ldots, a_{2m})=\begin{pmatrix}
r&u\\[4pt]
s&v
\end{pmatrix}, \quad M(c_1,c_2,\ldots, c_{k})=\begin{pmatrix}
r&u'\\[4pt]
s&v'
\end{pmatrix}.
$$
Since $rv-su=rv'-su'=1$ one has $u'=r+nu$ and $v'=s+nv$ for some $n\in\Z$. This implies the following relation on the matrices
$$
 M(c_1,c_2,\ldots, c_{k})=M^{+}(a_{1},\ldots, a_{2m})R^{n}.
$$
Applying the $q$-deformation of matrices \eqref{qmat} in the above identity and using Lemma \ref{keylem} one gets
\[
\begin{array}{rcl}
[ M(c_1,c_2,\ldots, c_{k})]_{q}&=&[M^{+}(a_{1},\ldots, a_{2m})]_{q}[R^{n}]_{q}\\[10pt]
\begin{pmatrix}
{\Rc^{}}&*\\[6pt]
\Sc^{}&*
\end{pmatrix}  &=   & \begin{pmatrix}
{\Rc^{+}}&*\\[6pt]
\Sc^{+}&*
\end{pmatrix}  \begin{pmatrix}
q^{n}&[n]_{q}&\\[6pt]
0&1
\end{pmatrix}\\[20pt]
\begin{pmatrix}
{\Rc^{}}&*\\[6pt]
\Sc^{}&*
\end{pmatrix}  &=   & q^{n}\begin{pmatrix}
{\Rc^{+}}&*\\[6pt]
\Sc^{+}&*
\end{pmatrix} \\[20pt]
\end{array}
\]
One deduces $\llbracket{}c_1,c_2,\ldots, c_{k}\rrbracket{}_{q}=\frac{\Rc^{}}{\Sc^{}}=\frac{\Rc^{+}}{\Sc^{+}}=[a_{1}, \ldots, a_{2m}]_{q}$. Theorem \ref{genfrac} is proved.

\subsection{$q$-Continuants} 
Entries in the matrices $M^+(a_1,\ldots,a_{2m}) $ and $M(c_1,c_2,\ldots, c_{k})$, or equivalently numerators and denominators of the corresponding continued fractions, may be explicitly given with the help of determinantal expressions known as Euler continuants. The $q$-analogues of Euler continuants have been introduced in \cite{MGOfmsigma}. We will mainly consider those related to negative continued fraction. Let us recall the definition and some properties
\begin{equation}
\label{KEq}
E_{k}(c_1,\ldots,c_k)_{q}:=
\left|
\begin{array}{cccccccc}
[c_1]_{q}&q^{c_{1}-1}&&&\\[6pt]
1&[c_{2}]_{q}&q^{c_{2}-1}&&\\[4pt]
&\ddots&\ddots&\!\!\ddots&\\[4pt]
&&1&\!\!\![c_{k-1}]_{q}&q^{c_{k-1}-1}\\[6pt]
&&&\!\!\!\!\!\!1&\!\!\!\!\!\!\!\![c_{k}]_{q}
\end{array}
\right|,
\end{equation}
where $c_{i}$'s are integers, and with convention $E_{0}()=1$ and $E_{-1}()=0$. 

When the $c_{i}$'s are positive integers $E_{k}(c_1,\ldots,c_k)_{q}$ is a polynomial in $q$ with positive integer coefficients. One has 
$$
\deg E_{k}(c_1,\ldots,c_k)_{q}=\sum_{i=1}^{k}c_{i}-k
$$
For any sequence of integers $(c_{1},\ldots, c_{k})$ one has
\begin{equation} \label{q-cont}
M_{q}(c_{1},\ldots, c_{k})=
\begin{pmatrix}
E_{k}(c_{1},\ldots, c_{k})&-q^{c_{k}-1}E_{k-1}(c_{1},\ldots, c_{k-1})\\[6pt]
E_{k-1}(c_{2},\ldots, c_{k})&-q^{c_{k}-1}E_{k-2}(c_{2},\ldots, c_{k-1})
\end{pmatrix}
\end{equation}
and 
$$
\llbracket{}c_1,c_2,\ldots, c_{k}\rrbracket{}_{q}=\frac{E_{k}(c_{1}, c_{2},\ldots, c_{k})_{q}}{E_{k-1}(c_{2}, c_{3},\ldots, c_{k})_{q}}.
$$

\subsection{Traces of $q$-deformed matrices (proof of Theorem \ref{tracePG})}
The sequence of letters $(\alpha_{1},\alpha_{2}, \ldots, \alpha_{n-1}, \alpha_{n})$ is a palindrome if it reads the same backward as forward, i.e. $\alpha_{1}=\alpha_{n}$, $\alpha_{2}= \alpha_{n-1}$, etc.
 A polynomial $\Pc\in \Z[q,q^{-1}]$ is said to be a palindrome, or to have palindromic coefficients, if its sequence of ordered coefficients is a palindrome. This means that  $\Pc$ is a palindrome if and only if $\Pc(q)=q^{N}\Pc(q^{-1})$ for some integer $N\in \Z$.

The traces of the $q$-deformed matrices are palindromes.

The goal of this section is to outline the proof of Theorem \ref{tracePG}.
The proof is based on three intermediate results that we formulate as lemmas. Two of them will be established in separate subsections.

\begin{lem}\label{TrPalin} For any sequences of integers $c_1,c_2,\ldots, c_{k}$ one has:
$$\Tr  M_{q}(c_1,c_2,\ldots, c_{k}) = \Tr  M_{q}(c_k,c_{k-1},\ldots, c_{1}). $$
\end{lem}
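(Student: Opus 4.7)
The plan is to derive the identity from three ingredients: invariance of the trace under transposition, cyclic invariance of the trace, and the existence of an invertible matrix $K\in\GL(2,\Q(q))$ that simultaneously conjugates $R_q^T$ back to $R_q$ and $S_q^T$ back to $S_q$. Using the direct identity $R_q^c S_q = \bigl(\begin{smallmatrix}[c]_q & -q^{c-1}\\ 1 & 0\end{smallmatrix}\bigr)$, one has the factorization $M_q(c_1,\ldots,c_k) = R_q^{c_1}S_q\,R_q^{c_2}S_q\cdots R_q^{c_k}S_q$ in $\GL(2,\Z[q^{\pm 1}])$, so that applying $\Tr M = \Tr M^T$ to the matrix for the reversed sequence gives
$$
\Tr M_q(c_k,\ldots,c_1) \;=\; \Tr\bigl((R_q^{c_1}S_q)^T(R_q^{c_2}S_q)^T\cdots(R_q^{c_k}S_q)^T\bigr),
$$
and it thus suffices to prove this equals $\Tr M_q(c_1,\ldots,c_k)$.

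Next I would seek $K\in\GL(2,\Q(q))$ satisfying $KR_q^T = R_qK$ and $KS_q^T = S_qK$ simultaneously. Interpreting these as a linear system in the four entries of $K$, a direct calculation shows that the system is consistent and admits the explicit solution
$$
K \;=\; \begin{pmatrix} 1-q^{-1} & 1 \\ 1 & 1-q \end{pmatrix}, \qquad \det K = 1-q-q^{-1}\ne 0.
$$
The main obstacle of the argument lies precisely here: a priori the two conjugation conditions could be incompatible, and one must verify by hand that they admit a common solution.

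With $K$ in hand, raising the first relation to the $c$-th power gives $(R_q^T)^c = K^{-1}R_q^cK$, so
$$
(R_q^cS_q)^T \;=\; S_q^T(R_q^T)^c \;=\; K^{-1}(S_qR_q^c)K,
$$
and the product telescopes as
$$
(R_q^{c_1}S_q)^T\cdots(R_q^{c_k}S_q)^T \;=\; K^{-1}\bigl(S_qR_q^{c_1}\,S_qR_q^{c_2}\cdots S_qR_q^{c_k}\bigr)K.
$$
Taking traces, using invariance under conjugation and then cyclic invariance $\Tr(XY)=\Tr(YX)$ to shift the leading $S_q$ to the end, one obtains
$$
\Tr\bigl((R_q^{c_1}S_q)^T\cdots(R_q^{c_k}S_q)^T\bigr) \;=\; \Tr\bigl(S_qR_q^{c_1}\cdots S_qR_q^{c_k}\bigr) \;=\; \Tr\bigl(R_q^{c_1}S_q\cdots R_q^{c_k}S_q\bigr) \;=\; \Tr M_q(c_1,\ldots,c_k),
$$
which combined with the rewriting in the first paragraph yields the claim.
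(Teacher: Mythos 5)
Your proof is correct, and it takes a genuinely different route from the paper. I checked the key computation: with $R_q=\left(\begin{smallmatrix}q&1\\0&1\end{smallmatrix}\right)$ and $S_q=\left(\begin{smallmatrix}0&-q^{-1}\\1&0\end{smallmatrix}\right)$, the two intertwining conditions $KR_q^T=R_qK$ and $KS_q^T=S_qK$ reduce to $b=c$, $d=b(1-q)$, $a=-q^{-1}d$, so they are indeed simultaneously solvable and your $K=\left(\begin{smallmatrix}1-q^{-1}&1\\1&1-q\end{smallmatrix}\right)$ with $\det K=1-q-q^{-1}\neq 0$ in $\Q(q)$ works; the factorization $M_q(c_1,\ldots,c_k)=R_q^{c_1}S_q\cdots R_q^{c_k}S_q$ holds on the nose (not merely up to a unit), and the telescoping plus cyclicity of the trace finishes the argument. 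The paper instead runs a double induction: an outer induction on $k$ and an inner induction on the value of the appended coefficient $c_{k+1}$, driven by reduction identities such as $M_q(c,1,d)=qM_q(c-1,d-1)$ and $\Tr M_q(c_1,\ldots,c_k,0)=-q^{-1}\Tr M_q(c_1+c_k,c_2,\ldots,c_{k-1})$, and along the way it isolates the auxiliary trace combinations $C-qB$ and $A+B-C$. Your argument is shorter and more conceptual, and it actually proves something stronger: $M_q(c_k,\ldots,c_1)^T$ is \emph{conjugate} over $\Q(q)$ to $S_qM_q(c_1,\ldots,c_k)S_q^{-1}$, not merely equal in trace. What the paper's heavier machinery buys is reusability: the very same induction scheme, with the same intermediate quantities, is recycled essentially verbatim to prove Lemma~\ref{TrPos} (positivity of the coefficients of the trace), which your conjugation argument does not address. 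So your proof is a clean replacement for Lemma~\ref{TrPalin} alone, but not for the combined package.
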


The proof of Lemma \ref{TrPalin} is postponed to \S\ref{pfPalin}. 

\begin{lem}\label{trace}
For any sequences of integers $c_1,c_2,\ldots, c_{k}$ the polynomial of $\Z[q,q^{-1}]$ given by the trace $\Tr M_{q}(c_1,c_2,\ldots, c_{k}) $ is a a palindrome. 
\end{lem}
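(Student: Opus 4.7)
The plan is to show directly that $T(q) := \Tr M_q(c_1,\ldots,c_k)$ satisfies $T(q) = q^N T(q^{-1})$ for some integer $N$, which is precisely the palindromic condition recalled in the text. First I would use the explicit product formula \eqref{qNegMat}, writing $M_q(c_1,\ldots,c_k) = N_1 N_2 \cdots N_k$ with
\[
N_i = N_i(q) := \begin{pmatrix} [c_i]_q & -q^{c_i-1} \\ 1 & 0 \end{pmatrix}.
\]
Using the elementary identity $[c]_{q^{-1}} = q^{1-c}[c]_q$, I would pull the scalar $q^{1-c_i}$ out of $N_i(q^{-1})$, obtaining
\[
N_i(q^{-1}) = q^{1-c_i}\, A_i, \qquad A_i := \begin{pmatrix} [c_i]_q & -1 \\ q^{c_i-1} & 0 \end{pmatrix}.
\]
Multiplying these together gives $T(q^{-1}) = q^{k - c_1 - \cdots - c_k}\, \Tr(A_1 A_2 \cdots A_k)$.

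Next, I would identify each $A_i$ with $N_i$ via a transpose-plus-conjugation trick. A direct $2\times 2$ check gives $A_i^T = D\, N_i\, D$ with $D := \mathrm{diag}(1,-1)$; since $D^2 = \Id$, the conjugations telescope in any product:
\[
(A_1 A_2 \cdots A_k)^T \;=\; A_k^T A_{k-1}^T \cdots A_1^T \;=\; D\, N_k N_{k-1}\cdots N_1\, D.
\]
Invariance of the trace under transposition and conjugation then yields $\Tr(A_1\cdots A_k) = \Tr(N_k\cdots N_1) = \Tr M_q(c_k,\ldots,c_1)$. Applying Lemma \ref{TrPalin} on the right-hand side gives $\Tr M_q(c_k,\ldots,c_1) = \Tr M_q(c_1,\ldots,c_k) = T(q)$, and hence
\[
T(q^{-1}) \;=\; q^{k - (c_1 + \cdots + c_k)}\, T(q),
\]
which exhibits $T(q)$ as a palindrome in $\Z[q,q^{-1}]$ with symmetry exponent $N = c_1+\cdots+c_k - k$. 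Note this is consistent with the degree formula $\deg E_k(c_1,\ldots,c_k)_q = \sum c_i - k$ recalled after \eqref{KEq}, since $T(q)$ is a sum of two such continuants.

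The only non-obvious ingredient is spotting the identity $A_i^T = D N_i D$, which is exactly the mechanism by which inversion $q \mapsto q^{-1}$ in each individual factor is converted into reversal of the word $(c_1,\ldots,c_k)$ at the level of the whole product; once this is in hand, the reversal symmetry supplied by Lemma \ref{TrPalin} immediately closes the loop. The rest of the argument is just bookkeeping of $q^{\pm 1}$ exponents, so the genuine content of the proof lies in combining this diagonal conjugation with the cyclic invariance of the trace.
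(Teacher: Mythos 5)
Your argument is correct and is essentially the paper's own proof: the paper likewise observes (via matrix transposition) that $\Tr M_{q}(c_k,\ldots,c_1)=q^{\sum_i(c_i-1)}\Tr M_{q^{-1}}(c_1,\ldots,c_k)$ and then closes the loop with Lemma~\ref{TrPalin}. The only difference is that you make explicit the conjugation by $\mathrm{diag}(1,-1)$ that the paper leaves implicit, which is a welcome clarification but not a different route.
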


\begin{proof}
Using matrix transpose one sees that
$$
M_{q}(c_k,c_{k-1},\ldots, c_{1})=q^{\sum_{i=1}^{k}(c_{i}-1)}M_{q^{-1}}(c_1,c_2,\ldots, c_{k})
$$
Combining this identity with Lemma \ref{TrPalin} one deduces the equality
$$
\Tr  M_{q}(c_1,c_2,\ldots, c_{k})=q^{\sum_{i=1}^{k}(c_{i}-1)} \Tr M_{q^{-1}}(c_1,c_2,\ldots, c_{k}),
$$
which exactly says that the polynomial $\Tr  M_{q}(c_1,c_2,\ldots, c_{k}) $ is a palindrome.
\end{proof}

\begin{lem}\label{TrPos}
For any sequences of positive integers $c_1,c_2,\ldots, c_{k}$ with $c_1,\ldots, c_{k-1}$ greater than 2,
the polynomial of $\Z[q]$ given by the trace $\Tr M_{q}(c_1,c_2,\ldots, c_{k}) $ has positive coefficients.
\end{lem}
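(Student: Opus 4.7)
My plan is to produce a positive combinatorial formula for the trace by unfolding it as a cancellation of signs. Using the factorisation $M_q(c_1,\ldots,c_k) = M_q(c_1)M_q(c_2)\cdots M_q(c_k)$ implicit in \eqref{qNegMat}, with each factor $M_q(c) = \begin{pmatrix}[c]_q & -q^{c-1}\\ 1 & 0\end{pmatrix}$ having a vanishing $(2,2)$-entry, I will expand the trace as a cyclic matrix-entry sum. Only sequences $(i_1,\ldots,i_k)\in\{1,2\}^k$ without two cyclically-adjacent $2$'s contribute, and writing $S\subseteq\{1,\ldots,k\}$ for the cyclic independent subset of indices immediately preceding a $2$, one obtains the signed formula
\begin{equation*}
\Tr M_q(c_1,\ldots,c_k) \;=\; \sum_{S \text{ cyc.~indep.}} (-1)^{|S|}\, q^{\sum_{j\in S}(c_j-1)}\prod_{j\notin S\cup(S+1)}[c_j]_q.
\end{equation*}

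Next I expand each $[c_j]_q = \sum_{\ell_j=0}^{c_j-1}q^{\ell_j}$ to refine the trace into a signed sum over enriched configurations $(S,\ell)$, with $\ell_j \in \{0,\ldots,c_j-1\}$ free away from $S\cup(S+1)$ and conventionally $\ell_j = c_j-1$ on $S$, $\ell_j=0$ on $S+1$. The core of the argument is the construction of a sign-reversing, $q$-degree-preserving involution $\Phi$ on these configurations, whose unpaired (fixed) points all contribute with positive sign. A natural candidate is the ``smallest eligible position'' toggle: at the smallest $j\in\{1,\ldots,k\}$ either remove $j$ from $S$ (when $j\in S$) or add $j$ to $S$ (when $\ell_j = c_j-1$, $\ell_{j+1}=0$, and the neighbours permit). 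The hypothesis that $c_j$ exceeds $2$ for the interior indices $j=1,\ldots,k-1$ is precisely what guarantees $\Phi$ is well-defined: it excludes the pathological situation $c_j=1$ under which removing $j$ could retroactively enable a smaller-index add toggle, breaking the involution.

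The fixed points of $\Phi$ correspond to a family of labellings $\ell$ avoiding a certain cyclic ``marginal'' pattern, each contributing $+q^{\ell_1+\cdots+\ell_k}$ to the trace and thus giving $\Tr M_q(c_1,\ldots,c_k)\in\Z_{\geq 0}[q]$. The main obstacle is the careful design and verification of $\Phi$ on all configurations: the naïve smallest-position rule breaks on a handful of boundary configurations involving the last index $k$ (where $c_k$ may be as small as $1$), and these require either a refined priority in the toggle or a separate ad-hoc pairing. The cyclic invariance $\Tr M_q(c_1,\ldots,c_k) = \Tr M_q(c_{i+1},\ldots,c_k,c_1,\ldots,c_i)$ of the trace, together with the reversal symmetry of Lemma~\ref{TrPalin}, allows one to position any small entry at a convenient index and thereby streamline the bookkeeping.
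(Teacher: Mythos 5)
Your starting point is sound: the expansion of $\Tr M_q(c_1,\ldots,c_k)$ over cyclically independent subsets $S$, obtained from the vanishing $(2,2)$-entries of the factors $M_q(c_j)$, is correct, and a sign-reversing involution on the refined configurations $(S,\ell)$ would indeed yield the lemma (and, as a bonus, a manifestly positive combinatorial formula for the trace, which the paper does not produce and which would bear on its unimodality conjecture). This is a genuinely different route from the paper's, which argues by a double induction --- on $k$ and on the last coefficient $c_k$ --- reusing the algebraic identities of the palindromicity proof to show successively that $C-qB$, $A+B-C$ and $[c+1]_qA+B-q^{c}C$ lie in $\Z_{\geq0}[q]$.

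However, as written your argument has a genuine gap: the involution $\Phi$, which is the entire content of the proof, is never actually defined. You name a ``smallest eligible position'' toggle as a \emph{candidate}, then concede that it ``breaks on a handful of boundary configurations involving the last index $k$'' and that these ``require either a refined priority in the toggle or a separate ad-hoc pairing'' --- but you supply neither. Likewise, the assertion that the hypothesis $c_j\geq 2$ for $j<k$ ``is precisely what guarantees $\Phi$ is well-defined'' is stated, not demonstrated; since the hypothesis is essential (e.g.\ $\Tr M_q(1,1)=-1$ and $\Tr M_q(1,1,1)=-2$, and even in the allowed range one has $\Tr M_q(2,1)=0$, so cancellations must be exact rather than strict), any correct involution must use it in a verifiable way. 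Finally, the fixed points of $\Phi$ are not characterized, so the claimed positive contribution of the unpaired terms cannot be checked. Until the toggle is written down explicitly and shown to be an involution that reverses sign, preserves the $q$-degree, and leaves only positively signed configurations unpaired, this remains a plan rather than a proof.
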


The proof of Lemma \ref{TrPos} is postponed to \S\ref{pfPos}. \\

Let $M$ be an element of $\PSL(2,\Z)$. Since the element $[M]_{q}$ of $\PG$ is a classe modulo  $\pm q^{N}\Id$, the trace of $[M]_{q}$ is a polynomial defined up to a factor of the form $\pm q^{N}$. 
Every element of $\PSL(2,\Z)$ can be written as $M(c_{1}, \ldots, c_{k})$ with positive integer coefficients $c_{i}$'s. This is a consequence of the following decompositions of the generators, that can be checked by direct computations,
$$
R=-M(2,1,1), \quad R^{-1}=-M(1,1,2,1), \quad S=-S^{-1}=-M(1,1,2,1,1).
$$
In addition, there exists a decomposition where the coefficients are all greater than two except perhaps for the first and last ones.
\begin{lem}[{\cite[Prop. 7.4]{MGOfarey1}}]\label{lemfarey}
Every matrix $M\in \PSL(2,\Z)$ can be written as $M=M(c_{1}, \ldots, c_{k})$ with all $c_{i}\geq 2$ except perhaps for $c_{1}$ or simultaneously $c_{1}, c_{2}$, and for $c_{k}$ or simultaneously $c_{k-1}, c_{k}$ which can be equal to one.\end{lem}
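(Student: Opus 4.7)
The plan is to work in two phases: first produce \emph{some} decomposition $M=M(c_1,\ldots,c_k)$ with strictly positive integer entries, then locally rewrite it to push all remaining $c_i\leq 1$ out to the four boundary slots allowed by the statement. For the first phase I would combine the three generator identities listed just above the lemma, namely $R=-M(2,1,1)$, $R^{-1}=-M(1,1,2,1)$ and $S=-M(1,1,2,1,1)$, with the concatenation identity
\[
M(c_1,\ldots,c_k)\cdot M(d_1,\ldots,d_l)=M(c_1,\ldots,c_k,d_1,\ldots,d_l),
\]
which follows at once from $M(c)=R^{c}S$. Since $R$ and $S$ generate $\PSL(2,\Z)$, any factorisation of a given $M$ in these generators then translates to an expression $M(c_1,\ldots,c_k)$ with every $c_i\in\{1,2\}$.

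The second phase is a clean-up by two elementary local moves, each verified in $\PSL(2,\Z)$ by a direct $2\times 2$ computation:
\begin{align*}
M(\ldots,a,1,b,\ldots) &= M(\ldots,a-1,b-1,\ldots),\\
M(\ldots,a,0,b,\ldots) &= M(\ldots,a+b,\ldots).
\end{align*}
I would scan the sequence produced in the first phase and apply these moves to the leftmost strictly interior position where $c_i\leq 1$: the length $k$ decreases strictly at every step (by $1$ and $2$ respectively), so the procedure terminates. The second move preserves positivity since $a+b>0$, while any $0$ created in the interior by the first move is eliminated by the second on the very next step. When the cascade reaches the first or second position (or, symmetrically, the last or penultimate one), it simply halts, and the surviving $1$'s sit in exactly the boundary slots permitted by the statement.

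The main obstacle is certifying that the cascade never gets stuck, i.e.\ that the first move cannot produce a negative interior coefficient or an unresolvable configuration further to the right. I would handle this by induction on $k$ together with a short case analysis on the shape of the leftmost block $(c_{i-1},c_i,c_{i+1})$ with $c_i\leq 1$: by always treating the leftmost violation first one maintains $c_{i-1}\geq 1$ strictly before each step, so the first move can produce only a $0$ at position $i-1$ (never a negative value), and this $0$ is removed by the second move at the following iteration. Once the algorithm terminates, every strictly interior entry is $\geq 2$ and at most two consecutive $1$'s may have accumulated at each end, which is exactly the form claimed in the lemma.
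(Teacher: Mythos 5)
First, a point of comparison: the paper does not prove this lemma at all --- it is imported verbatim from \cite[Prop.~7.4]{MGOfarey1} --- so your argument can only be judged on its own terms. Your overall strategy is a reasonable and standard one: both local identities are correct in $\PSL(2,\Z)$ (indeed $M(a,1,b)=M(a-1,b-1)$ exactly and $M(a,0,b)=-M(a+b)$), the concatenation identity is immediate from $M(c)=R^cS$, and phase one does produce a word with all $c_i\in\{1,2\}$.

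The gap sits exactly where you flag ``the main obstacle'', and the argument you give there does not close it. Your invariant controls only the \emph{left} neighbour of the entry being removed: move~1 at position $i$ also decrements $c_{i+1}$, and under the ``leftmost violation first'' rule $c_{i+1}$ can already be $0$ when the move is applied, producing $-1$. Concretely, an interior block $(2,1,1)$ occurs already in $SRS=\pm M(1,1,2,1,1,2,1,1,1,1,2,1,1)$: the leftmost interior violation is the $1$ in position $4$, move~1 gives $(1,1,1,0,2,1,1,1,1,2,1,1)$, the new leftmost violation is the $1$ in position $3$ whose right neighbour is $0$, and move~1 there yields the entry $-1$. So the claim that ``the first move can produce only a $0$ at position $i-1$'' is false, and the claim that every created $0$ is absorbed ``on the very next step'' contradicts the leftmost-first priority. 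The procedure is repairable (give move~2 absolute priority, or collapse a maximal run of entries $\le 1$ in one step using $M(1,1,1)=-\Id$), but that repair, together with the endgame claims you assert without proof --- that no $0$ ever reaches position $1$ or $k$, where neither move applies, and that the surviving $1$'s occupy only the permitted boundary slots (e.g.\ that a configuration $(2,1,c_3,\ldots)$ with $c_3\ge 2$ cannot survive) --- is precisely the substance of the lemma. As written, the proof is incomplete.
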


The trace of the matrix $M_{q}(c_{1}, \ldots, c_{k})$ is invariant under a cyclic permutation of the coefficients $c_{i}$ and one has $M_{q}(1,1,1)=-\Id$. This implies by Lemma \ref{lemfarey} that the trace of $[M]_{q}$ is equal, up to a factor $\pm q^{N}$, to the trace of a matrix $M_{q}(c_{1}, \ldots, c_{k})$ with $c_1,\ldots, c_{k-1}$ greater than 2. Hence, Lemma \ref{TrPos} and Lemma \ref{trace} apply and this will prove Theorem \ref{tracePG}.\\

In addition we formulate the following conjecture.
\begin{conj}
For any sequences of integers $c_1,c_2,\ldots, c_{k}$ the polynomial of $\Z[q,q^{-1}]$ given by $\Tr M_{q}(c_1,c_2,\ldots, c_{k}) $ has unimodal sequence of coefficients.
\end{conj}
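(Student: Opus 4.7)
My plan is threefold: reduce to a canonical form using the results already established, assign a combinatorial meaning to the coefficients of the trace, and establish unimodality by an injection argument.

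First I would exploit cyclic invariance of the trace together with Lemma \ref{lemfarey} to reduce to sequences $(c_{1},\ldots,c_{k})$ with all $c_{i}\geq 2$. In that regime Lemma \ref{TrPos} guarantees positive coefficients and Lemma \ref{trace} guarantees palindromicity; moreover the degree is $\sum_{i}(c_{i}-1)$, pinning down the center of symmetry. Unimodality then becomes the statement that the coefficients weakly increase up to the middle position and weakly decrease afterwards, and we only need to treat one standardized representative of each cyclic class.

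Second, I would look for a combinatorial interpretation of the coefficients of $\Tr M_{q}(c_{1},\ldots,c_{k})$. Expanding the matrix product in \eqref{qNegMat}, the trace is a weighted sum over closed walks on a two-vertex graph, where at step $i$ one chooses either the diagonal weight $[c_{i}]_{q}$ or the pair of off-diagonal weights $1$ and $-q^{c_{i}-1}$. Grouping these walks one recovers the identity
$$\Tr M_{q}(c_{1},\ldots,c_{k}) \;=\; E_{k}(c_{1},\ldots,c_{k})_{q} \;-\; q^{c_{k}-1}\, E_{k-2}(c_{2},\ldots,c_{k-1})_{q},$$
which, combined with the standard ``domino--square tiling'' model for $q$-continuants, suggests counting \emph{cyclic} tilings of a strip of length $k$ by squares and dominoes, weighted by the total number of boxes strictly to the left of each square. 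Equivalently, one may phrase the model via $\SL_{2}$-friezes of period $k$, which is the natural setting for the cyclic invariants of a trace.

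Third, I would try to build an explicit weight-preserving injection from the tilings contributing to the coefficient of $q^{j}$ into those contributing to $q^{j+1}$, for every $j$ strictly below the palindromic center; a stronger variant would be to establish log-concavity, which together with positivity and palindromicity implies unimodality. The main obstacle is that the analogous unimodality conjecture for the continuants $E_{k}(c_{1},\ldots,c_{k})_{q}$ themselves is still open in full generality (cf.\ \cite{MGOfmsigma}), so one cannot simply reduce to it. The hopeful feature in the trace case is that cyclic invariance provides an additional symmetry unavailable for the linear continuants, and one may attempt to define the required injection locally at each position of the cycle and then glue the local pieces by rotating the base point. Should the direct combinatorial route resist, an alternative is to interpret the trace as a character or matrix coefficient for a suitable representation of a quantum analogue of $\PSL(2,\Z)$, where known positivity and unimodality statements from quantum group theory could be invoked.
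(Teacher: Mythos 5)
The statement you are addressing is left as an open conjecture in the paper; the authors give no proof of it, so there is nothing to compare your argument against except its own internal completeness. Judged on that basis, your proposal is not a proof: it is a reduction plus a research plan. Your first two steps are sound and essentially free, since they only repackage results the paper already establishes: cyclic invariance of the trace together with Lemma \ref{lemfarey} reduces to sequences with $c_{i}\geq 2$, Lemma \ref{TrPos} and Lemma \ref{trace} give positivity and palindromicity, and the identity $\Tr M_{q}(c_{1},\ldots,c_{k})=E_{k}(c_{1},\ldots,c_{k})_{q}-q^{c_{k}-1}E_{k-2}(c_{2},\ldots,c_{k-1})_{q}$ is immediate from \eqref{q-cont}. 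None of this touches unimodality.

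The genuine gap is your entire third step: the weight-preserving injection from the objects counted by the coefficient of $q^{j}$ into those counted by $q^{j+1}$ is never defined, and everything after the reduction is phrased conditionally (``I would try to build\ldots'', ``Should the direct combinatorial route resist\ldots''). Two specific obstacles deserve mention. First, as you yourself note, unimodality of the continuants $E_{k}(c_{1},\ldots,c_{k})_{q}$ is itself open, so there is no known base case to bootstrap from. Second, even granting unimodality of both continuants in the trace formula, the trace is a \emph{difference} of two polynomials with positive coefficients, and a difference of unimodal (or even log-concave) positive palindromic polynomials need not be unimodal; so the formula in your second step does not by itself reduce the problem, and the proposed cyclic-tiling model would have to handle the cancellation intrinsically. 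The fallback appeals to log-concavity or to representation-theoretic positivity for a quantum analogue of $\PSL(2,\Z)$ are likewise unsubstantiated. In short, your proposal identifies a plausible line of attack but proves nothing beyond what Lemmas \ref{trace} and \ref{TrPos} already give, and the conjecture remains open.
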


\subsection{Examples: Cohn matrices}
We consider the following 2 matrices of $\SL_{}(2,\Z)$:
$$
A:=\begin{pmatrix}
2&1\\
1&1
\end{pmatrix}=-M(2,2,1,1), \qquad
B:=\begin{pmatrix}
5&2\\
2&1
\end{pmatrix}=-M(3,2,2,1,1).
$$
These matrices and their products according to Christoffel words provide with all the Markov numbers, see e.g. \cite{Reu}. All these matrices are also known as Cohn matrices. Markov numbers appear twice in the matrices, as the entries in the upper right corner as well as the third of the traces of the matrices. The $q$-deformations of the matrices will lead to $q$-analogues of Markov numbers. We compute the $q$-deformations of the first Cohn matrices.
$$
\begin{array}{llc}
&[A]_{q}=
\begin{pmatrix}
q+q^{2}&1\\
q&1
\end{pmatrix}, \Tr [A]_{q}=1+q+q^{2}\\[12pt]
&[B]_{q}=
\begin{pmatrix}
q+2q^{2}+q^{3}+q^{4}&1+q\\
q+q^{2}&1
\end{pmatrix}, \Tr [B]_{q}=(1+q+q^{2})(1+q^{2})\\[12pt]
&[AB]_{q}=
\begin{pmatrix}
q+2q^{2}+3q^{3}+3q^{4}+2q^{5}+q^{6}&1+q+2q^{2}+q^{3}\\
q+2q^{2}+2q^{3}+q^{4}+q^{5}&1+q+q^{2}
\end{pmatrix},\\[10pt]
& \Tr [AB]_{q}=(1+q+q^{2})(1+q+q^{2}+q^{3}+q^{4})\\[12pt]
&[A^{2}B]_{q}=
\begin{pmatrix}
q+3q^{2}+5q^{3}+6q^{4}+7q^{5}+5q^{6}+3q^{7}+q^{8}&1+2q+3q^{2}+3q^{3}+3q^{4}+q^{5}\\
q+3q^{2}+4q^{3}+4q^{4}+4q^{5}+2q^{6}+q^{7}&1+q+2q^{2}+2q^{3}+q^{4}
\end{pmatrix},\\[10pt]
& \Tr [A^{2}B^{}]_{q}=(1+q+q^{2})(1+2q+2q^{2}+3q^{3}+3q^{4}+2q^{5}+q^{6})\\[12pt]
\end{array}
$$

$$
\begin{array}{llc}
&[A^{}B^{2}]_{q}=\\[4pt]
&\begin{pmatrix}
q+3q^{2}+7q^{3}+11q^{4}+13q^{5}+13q^{6}+11q^{7}+7q^{8}+3q^{9}+q^{10}&1+2q+5q^{2}+6q^{3}+6q^{4}+5q^{5}+3q^{6}+q^{7}\\[10pt]
q+3q^{2}+6q^{3}+8q^{4}+8q^{5}+7q^{6}+5q^{7}+2q^{8}+q^{9}&1+2q+4q^{2}+4q^{3}+3q^{4}+2q^{5}+q^{6}
\end{pmatrix},\\[14pt]
& \Tr [A^{}B^{2}]_{q}=(1+q+q^{2})(1+2q+4q^{2}+5q^{3}+5q^{4}+5q^{5}+4q^{6}+2q^{7}+q^{8})\\[12pt]
&[A^{3}B^{}]_{q}=\\[4pt]
&\begin{pmatrix}
q+4q^{2}+8q^{3}+12q^{4}+15q^{5}+15q^{6}+13q^{7}+8q^{8}+4q^{9}+q^{10}&1+3q+5q^{2}+7q^{3}+7q^{4}+6q^{5}+4q^{6}+q^{7}\\
q+4q^{2}+7q^{3}+9q^{4}+10q^{5}+9q^{6}+6q^{7}+3q^{8}+q^{9}&1+3q+4q^{2}+5q^{3}+4q^{4}+3q^{5}+q^{6}
\end{pmatrix},\\[12pt]
& \Tr [A^{3}B^{}]_{q}=(1+q+q^{2})(1+q^{2})(1+3q+3q^{2}+3q^{3}+3q^{4}+3q^{5}+q^{6})\\[12pt]
\end{array}
$$
We observe that the traces are always divisible by $[3]_{q}$. Note that the same approach to $q$-deformations of Cohn matrices based on the $q$-rationals was introduced in \cite{Kog}. Our computations coincide up to a power of $q$ due to different initial deformations of the matrices $A$ and $B$.

\subsection{Proof of Lemma \ref{TrPalin}}\label{pfPalin}
We will proceed by induction. We start with useful relations on the matrices $M_{q}(c_1,c_2,\ldots, c_{k})$ and their traces which allow some reductions on the sequence of coefficients.

\begin{lem}\label{reduc} For all integers $c_1,c_2,\ldots, c_{k}$, $c$ and $d$ one has

(i)  $M_{q}(c, 1, d)=qM_{q}(c-1, d-1)$;

(ii) $M_{q}(c, -1, d)=-q^{-2}M_{q}(c+1, d+1)$;

(iii) $\Tr  M_{q}(c_1,c_2,\ldots, c_{k},0)=-q^{-1}\Tr  M_{q}(c_1+c_{k},c_2,\ldots, c_{k-1})$.

\end{lem}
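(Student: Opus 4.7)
All three statements are essentially local matrix identities, so my plan is to handle (i) and (ii) by a direct $2 \times 2$ matrix computation, and to reduce (iii) to an even cleaner local identity via the cyclic invariance of the trace.

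\textbf{Step 1 (a key single-letter identity).} First I would establish the ``absorption'' identity
\[
M_{q}(a,0,b) \;=\; -q^{-1}\, M_{q}(a+b).
\]
By definition $M_{q}(0) = \left(\begin{smallmatrix} 0 & -q^{-1} \\ 1 & 0 \end{smallmatrix}\right)$, and a direct multiplication of the three $2\times 2$ blocks reduces the check to the scalar relation
\[
-q^{a-1}[b]_{q} - q^{-1}[a]_{q} \;=\; -q^{-1}[a+b]_{q},
\]
which is the familiar $q$-integer addition formula $[a+b]_{q} = [a]_{q} + q^{a}[b]_{q}$.

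\textbf{Step 2 (identity (i)).} Here $M_{q}(1) = \left(\begin{smallmatrix} 1 & -1 \\ 1 & 0 \end{smallmatrix}\right)$. I would compute $M_{q}(c)\,M_{q}(1)$ and simplify using $[c]_{q} - q^{c-1} = [c-1]_{q}$, obtaining
\[
M_{q}(c)\,M_{q}(1) \;=\; \begin{pmatrix} [c-1]_{q} & -[c]_{q} \\ 1 & -1 \end{pmatrix}.
\]
Multiplying by $M_{q}(d)$ on the right and comparing with $q\,M_{q}(c-1)\,M_{q}(d-1)$ entry by entry reduces everything to $[d]_{q} - 1 = q[d-1]_{q}$ and the identity $[c-1]_{q}[d]_{q} - [c]_{q} = q[c-1]_{q}[d-1]_{q} - q^{c-1}$, which follows at once from $(q-1)[n]_{q} = q^{n}-1$. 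Identity (ii) is established in the same spirit, using instead $[-1]_{q} = -q^{-1}$, the relation $[c]_{q} + q^{c} = [c+1]_{q}$, and its twin $q[d]_{q} + 1 = [d+1]_{q}$.

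\textbf{Step 3 (identity (iii) from Step 1).} This is where the reformulation pays off. Since the trace is cyclic on any product of matrices in $\GL(2,\Z[q^{\pm 1}])$, I would write
\[
\Tr M_{q}(c_{1},c_{2},\ldots,c_{k},0) \;=\; \Tr\bigl[M_{q}(c_{k})\,M_{q}(0)\,M_{q}(c_{1})\cdot M_{q}(c_{2},\ldots,c_{k-1})\bigr],
\]
then apply the absorption identity of Step 1 to the bracketed group of three factors to get
\[
= \Tr\bigl[-q^{-1}M_{q}(c_{1}+c_{k})\cdot M_{q}(c_{2},\ldots,c_{k-1})\bigr] \;=\; -q^{-1}\Tr M_{q}(c_{1}+c_{k},c_{2},\ldots,c_{k-1}).
\]

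\textbf{Obstacles.} There really is no deep obstacle: everything reduces to the basic $q$-integer recursions. The one place where a little care is needed is in Step 3, since the cyclic rearrangement combines the \emph{first} letter $c_{1}$ with the \emph{last} letter $c_{k}$ (passing the $0$ across), so one must keep track that the remaining block is $M_{q}(c_{2},\ldots,c_{k-1})$ and not some reversed or shifted version. Once that bookkeeping is set up, the three identities of the lemma all drop out of Steps 1--2.
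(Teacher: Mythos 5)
Your proposal is correct and follows essentially the same route as the paper: items (i) and (ii) by direct $2\times 2$ computation, and item (iii) by collapsing the inserted $0$ (your absorption identity $M_{q}(a,0,b)=-q^{-1}M_{q}(a+b)$ is exactly the paper's relation $S_{q}R_{q}^{0}S_{q}=-q^{-1}\Id$ written in terms of the block matrices) combined with cyclic invariance of the trace. The intermediate identities you reduce to are all valid instances of the $q$-integer recursions, so nothing is missing.
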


\begin{proof}
Items (i) and (ii) can be checked by direct computations on the matrices. For item (iii) we write 
$$M_{q}(c_1,c_2,\ldots, c_{k},0)=R_{q}^{c_1}S_{q}\,R_{q}^{c_{2}}S_{q}\cdots{}R_{q}^{c_k}\underbrace{S_{q}R_{q}^{0}S_{q}}_{{-q^{-1}\Id}}=-q^{-1}M_{q}(c_1,c_2,\ldots, c_{k-1})R_{q}^{c_k}.$$
Taking the trace we obtain
$$\Tr M_{q}(c_1,c_2,\ldots, c_{k},0)=-q^{-1}\Tr (M_{q}(c_1,c_2,\ldots, c_{k-1})R_{q}^{c_k})=-q^{-1}\Tr  M_{q}(c_1+c_{k},c_2,\ldots, c_{k-1}),$$
which gives (iii).
\end{proof}

\noindent
We consider the following induction hypothesis: 

\noindent
(H) $\Tr  M_{q}(c_1,c_2,\ldots, c_{k}) = \Tr  M_{q}(c_k,c_{k-1},\ldots, c_{1})$, for any sequence of integers $c_1,c_2,\ldots, c_{k}$.\\


\noindent
Property (H) is clear for $k=2$, since $M_{q}(c_{1},c_{2})=M_{q}(c_{1})M_{q}(c_{2})$.\\

We now assume that (H) holds up to some fixed $k\geq 2$.

Let us fix $(c_1,c_2,\ldots, c_{k})$ and introduce notation for the entries of the matrices
\begin{equation}\label{not}
M_{q}(c_1,c_2,\ldots, c_{k}) =
\begin{pmatrix}
A&B\\
C&D
\end{pmatrix},
\qquad 
M_{q}(c_k,c_{k-1},\ldots, c_{1})=
\begin{pmatrix}
\bar A& \bar B\\
\bar C& \bar D
\end{pmatrix},
\end{equation}

We will need the following preliminary computations:
\begin{equation}\label{t00}
\begin{array}{lclcl}
M_{q}(c_1,c_2,\ldots, c_{k}, c)&=&\begin{pmatrix}
A&B\\
C&D
\end{pmatrix}
\begin{pmatrix}
[c]&-q^{c-1}\\
1&0
\end{pmatrix}
&=&\begin{pmatrix}
[c]A+B&-q^{c-1}A\\
[c]C+D&-q^{c-1}C
\end{pmatrix}
\\[14pt]
M_{q}(c, c_k,c_{k-1},\ldots, c_{1})&=&
\begin{pmatrix}
[c]&-q^{c-1}\\
1&0
\end{pmatrix}
\begin{pmatrix}
\bar A& \bar B\\
\bar C& \bar D
\end{pmatrix}
&=&\begin{pmatrix}
[c]\bar A-q^{c-1}\bar C&[c]\bar B-q^{c-1}\bar D\\
\bar A&\bar B
\end{pmatrix}
\end{array}
\end{equation} 
where $c$ is any integer and $[c]=[c]_{q}$.

By (H) one has the following relation between the entries of the matrices \eqref{not}:
\begin{equation*}\label{t1}
A+D=\bar A+\bar D.
\end{equation*}

\noindent
\textbf{Step 1}. We show that in addition one has the following relation in the  matrices \eqref{not}:
\begin{equation}\label{t1b}
C-q^{}B=\bar C -q^{}\bar B.
\end{equation}
Using \eqref{t00} with $c=0$ we see that 
$$
C-q^{}B=-q\Tr M_{q}(c_1,c_2,\ldots, c_{k},0) \quad \text {and } \quad\bar C -q^{}\bar B=-q \Tr  M_{q}(0,c_k,c_{k-1},\ldots, c_{1}).
$$
The equality between these two is obtained using Lemma \ref{reduc} (iii) that allows to reduce the length of the cycle of coefficients and then by applying (H).\\

\noindent
\textbf{Step 2}. We show that in addition one has the following relation in the  matrices \eqref{not}:
\begin{equation}\label{t2}
A+B-C=\bar A+\bar B-\bar C
\end{equation}
Using \eqref{t00} with $c=1$ we see that 
$$
A+B-C=\Tr  M_{q}(c_1,c_2,\ldots, c_{k}, 1), \qquad \bar A+\bar B-\bar C=\Tr  M_{q}(1, c_k,c_{k-1},\ldots, c_{1}).
$$
The invariance of the trace by cyclic permutations and Lemma \ref{reduc} (i) give on the one hand 
\begin{eqnarray*}
\Tr  M_{q}(c_1,c_2,\ldots, c_{k}, 1) &=&\Tr  M_{q}(c_2,\ldots, c_{k}, 1, c_1) \\ 
&=&q\Tr  M_{q}(c_2,\ldots, c_{k}-1, c_1-1)
\end{eqnarray*}
and on the other hand 
\begin{eqnarray*}
\Tr  M_{q}(1, c_k,c_{k-1},\ldots, c_{1}) &=&\Tr M_{q}(c_{1},1, c_k,c_{k-1},\ldots, c_{2})   \\ 
&=&q\Tr M_{q}(c_{1}-1, c_k-1,c_{k-1},\ldots, c_{2})
\end{eqnarray*}
By (H) we obtain \eqref{t2}.\\

\noindent
\textbf{Step 3}. We want to show: 
\begin{eqnarray}\label{t0}
\Tr  M_{q}(c_1,c_2,\ldots, c_{k}, c_{k+1}) &=& \Tr  M_{q}(c_{k+1}, c_k,c_{k-1},\ldots, c_{1}),  
\end{eqnarray}
We proceed by induction on the integer $c_{k+1}\geq 0$. 

The cases $c_{k+1}=0$ and $c_{k+1}=1$ have been established at steps 1 and 2.
We then assume that \eqref{t0} holds for $c_{k+1}=c>0$. 
By  \eqref{t00} this gives us the relation:
\begin{equation}\label{t4}
\begin{array}{lcl}
[c]A+B-q^{c-1}C&=&[c]\bar A+\bar B -q^{c-1}\bar C.
\end{array}
\end{equation}
Now we compute
\begin{equation*}\label{t5}
\begin{array}{lcl}
\Tr  M_{q}(c_1,c_2,\ldots, c_{k}, c+1)&=&
[c+1]A+B-q^{c}C\\[4pt]
&=&q[c]A+A+B-q^{c}C\\[4pt]
&=&(q[c]A+qB-q^{c}C)+(A+B-C)+(C-qB)\\[4pt]
&=&(q[c]\bar A+q\bar B-q^{c}\bar C)+ (\bar A+\bar B-\bar C)+(\bar C-q\bar B)\\[4pt]
&=& [c+1]\bar A+\bar B-q^{c}\bar C\\[4pt]
&=& \Tr  M_{q}(c+1, c_k,c_{k-1},\ldots, c_{1}),
\end{array}
\end{equation*}
where we have used $[c+1]=q[c]+1$ and all the relations \eqref{t4}, \eqref{t2}, \eqref{t1b}.\\

At this stage we have proved that \eqref{t0} holds for all $c_{k+1}\geq 0$.
The case with $c_{k+1}\leq 0$ will be established the same way. We will replace the relation of Step 2 with the one given by the identity
$$\Tr  M_{q}(c_1,c_2,\ldots, c_{k}, -1)=\Tr  M_{q}(-1, c_k,c_{k-1},\ldots, c_{1})$$
that can be obtained with Lemma \ref{reduc} (ii). And then we will proceed at step 3 by a decreasing induction on the integer $c_{k+1}<0$ using $[c-1]=q^{-1}[c]-q^{-1}$.

\subsection{Proof of Lemma \ref{TrPos}}\label{pfPos}

We will mimic the proof of Lemma \ref{TrPalin}.
The induction hypothesis is now

(H') $\Tr  M_{q}(c_1,c_2,\ldots, c_{k})$ is a polynomial with positive integer coefficients for any sequence of positive integers $c_1,c_2,\ldots, c_{k}$ with $c_{i}\geq 2$, $\forall i<k$.

The property (H') can be checked by direct computations for $k=1$ and $k=2$.

We proceed as in the proof of Lemma \ref{TrPalin}.
At step 1 we will obtain
$$
C-qB\; \in \Z_{\geq0}[q].
$$
At step 2 we will obtain
$$
A+B-C \; \in \Z_{\geq0}[q].
$$
At step 3 assuming
$$\Tr  M_{q}(c_1,c_2,\ldots, c_{k}, c)=[c]A+B-q^{c-1}C  \; \in \Z_{\geq0}[q],
$$
we will be able to deduce wit the same computation
$$
\Tr  M_{q}(c_1,c_2,\ldots, c_{k}, c+1)=[c+1]A+B-q^{c}C\; \in \Z_{\geq0}[q].
$$
This established (H') by induction.

\begin{rem}
In the proves of Lemmas \ref{TrPalin} and \ref{TrPos} we have established properties on the entries of the matrices $M_{q}(c_1,c_2,\ldots, c_{k})$ that translate as follows in terms of $q$-continuants
\begin{eqnarray*}
E_{k}(c_1,c_2,\ldots, c_{k})_{q}-q^{c_{k}-1}E_{k-2}(c_2,\ldots, c_{k-1})_{q}&=&
E_{k}(c_k,c_{k-1},\ldots, c_{1})_{q}-q^{c_{1}-1}E_{k-2}(c_{k-1},\ldots, c_{2})_{q}\\[6pt]
E_{k-1}(c_2,\ldots, c_{k})_{q}-q^{c_{k}}E_{k-1}(c_1,\ldots, c_{k-1})_{q}&=&
E_{k-1}(c_{k-1},\ldots, c_{1})_{q}-q^{c_{1}}E_{k-1}(c_{k},\ldots, c_{2})_{q}
\end{eqnarray*}
Moreover, these polynomials are palindromes and when $c_1,c_2,\ldots, c_{k}$ are positive integers they have positive integer coefficients.
\end{rem}

\section{Quadratic irrationals}\label{secquad}
\subsection{Real quadratic irrational numbers}
A  real quadratic irrational number is a real number $x$ of the form $x=\frac{r\pm \sqrt{p}}{s}$, with $r\in\Z$, $s, p\in\Z_{>0}$ and $p$ is not a square in $\Z_{>0}$. The following assertions are equivalent:

(a) $x$ is a real quadratic irrational number,

(b) $x$ is a solution of an equation $aX^{2}+bX+c=0$, with $a,b,c\in \Z$ and $b^{2}-4ac$ positive and not a square,

(c) there exists $M\in \SL(2, \Z)$, with $\Tr M>2$, such that $M\cdot x=x$,

(d) $x$ has a periodic infinite continued fraction expansion [Lagrange Theorem].

Theorem \ref{result1} stated in the Introduction says that the $q$-deformations that we have introduced behaves nicely with the characterization of quadratic irrationals. 

\subsection{Proof of Theorem \ref{result1}}
Item (iii) of Theorem \ref{result1} is now a corollary of Proposition \ref{action} and this immediately implies Item (i) (except for the palindromicity property of $\Pc$ that will be established in \S\ref{palP}) and  Item (ii). Item (iv) is a tautology coming from \S\ref{infCF} and Lagrange theorem (the property (d) of quadratic irrational recalled in the previous subsection).

\subsection{Explicit expressions}

As recalled in the first paragraph $x$ is a quadratic irrational number if and only if its expansion as a continued fraction becomes periodic, i.e. if and only if there exist integers $(b_1, \ldots, b_l)$, with $b_{i}\geq 2$ for $i\geq 2$, and integers $(c_1,\ldots,c_k)\not=(2, \ldots, 2)$, with $c_{i}\geq 2$ for all $i$,  such that
$$
x=\llbracket{}b_1, \ldots, b_l, {c_1,\ldots,c_k}, {c_1,\ldots,c_k}, {c_1,\ldots,c_k},\dots \rrbracket{}
$$
One writes
$x=\llbracket{}b_1, \ldots, b_l, \overline{c_1,\ldots,c_k}\rrbracket{} $. When 
$x=\llbracket{} \overline{c_1,\ldots,c_k}\rrbracket{} $ one says that $x$ has a purely periodic expansion.

\begin{lem}
If  $x=\llbracket{} \overline{c_1,\ldots,c_k}\rrbracket{} $ then $[x]_{q}$ is a fixed point of the matrix $M_{q}(c_1,\ldots,c_k)$.
\end{lem}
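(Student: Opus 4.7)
The plan is to reduce the claim to Proposition \ref{action} (the commutation of the $\PSL(2,\Z)$-action with $q$-deformation) together with the fact, coming from Lemma \ref{lemgen}, that a purely periodic continued fraction $x=\llbracket\overline{c_1,\ldots,c_k}\rrbracket$ is exactly a fixed point of the M\"obius transformation attached to $M(c_1,\ldots,c_k)$.

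First I would recall, using Lemma \ref{lemgen} applied to the matrices \eqref{matMM+}, that for any real $y$ the substitution
$$
M(c_1,\ldots,c_k)\cdot y \;=\; c_1-\cfrac{1}{c_2-\cfrac{1}{\ddots-\cfrac{1}{c_k-\tfrac{1}{y}}}}
$$
holds. The purely periodic assumption $x=\llbracket c_1,\ldots,c_k,c_1,\ldots,c_k,\ldots\rrbracket$ precisely means that plugging $y=x$ into the right-hand side returns $x$, so that $M(c_1,\ldots,c_k)\cdot x=x$ in $\R\cup\{\infty\}$.

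Next I would invoke Proposition \ref{action}, which says the modular action commutes with the $q$-deformation. Applying it to the identity $M(c_1,\ldots,c_k)\cdot x=x$, and using that $[M(c_1,\ldots,c_k)]_q=M_q(c_1,\ldots,c_k)$ as a class in $\PG$ (by \eqref{qmat}), gives
$$
[x]_q \;=\; \bigl[M(c_1,\ldots,c_k)\cdot x\bigr]_q \;=\; [M(c_1,\ldots,c_k)]_q\cdot[x]_q \;=\; M_q(c_1,\ldots,c_k)\cdot[x]_q,
$$
which is the desired statement. Note that the global scalar $\pm q^N$ ambiguity in choosing a representative of $[M(c_1,\ldots,c_k)]_q$ in $\GL(2,\Z[q^{\pm 1}])$ is harmless, since scalar matrices act trivially under M\"obius transformations.

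The only thing that demands a moment's attention is verifying that Proposition \ref{action}, which was formulated for a single rational $x$ being acted on, is indeed available for an arbitrary $q$-real $[x]_q\in\Z[[q]]\cup\{\infty\}$; but the commutation $[R\cdot x]_q=R_q\cdot[x]_q$ and $[S\cdot x]_q=S_q\cdot[x]_q$ is exactly the content of Theorem \ref{relx}, which by hypothesis holds for all real $x$, and extends to arbitrary words in $R,S$ by induction. Hence no genuine obstacle arises and the lemma follows directly.
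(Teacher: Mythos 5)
Your proposal is correct and follows essentially the same route as the paper: the paper also observes that $x$ is classically a fixed point of $M(c_1,\ldots,c_k)$ and then applies Proposition \ref{action} to transfer this to $[x]_q$. Your additional remarks (the harmlessness of the scalar $\pm q^N$ ambiguity and the extension of the commutation to irrational $x$ via Theorem \ref{relx}) merely make explicit details the paper leaves implicit.
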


\begin{proof}
It is a classical result that if $x=\llbracket{} \overline{c_1,\ldots,c_k}\rrbracket{} $ then $x$ is a fixed point of the matrix $M(c_1,\ldots,c_k)$. Applying Proposition \ref{action} one gets the $q$-analogue of this result.
\end{proof}

\begin{prop}\label{ABC}
If $x$ is a fixed point of the matrix $M(c_1,\ldots,c_k)$ then its $q$-deformation $[x]_{q}$ satisfies the equation
$$
\A X^{2}-\B X+\cC=0,
$$
where $\A$, $\B$ and $\C$ are polynomials in $\Z[q,q^{-1}]$ given by $q$-continuants
\begin{eqnarray*}
\A&=&E_{k-1}({c_2,\ldots,c_k})_{q} \\
\B&=& E_{k}({c_1,\ldots,c_k})_{q}+q^{c_{k}-1}E_{k-2}({c_2,\ldots,c_{k-1}})_{q} \\
\cC&=&q^{c_{k}-1}E_{k-1}({c_1,\ldots,c_{k-1}})_{q} 
\end{eqnarray*}

\end{prop}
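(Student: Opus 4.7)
The plan is to derive the quadratic equation directly from the M\"obius fixed point condition, using the explicit description of $M_q(c_1,\ldots,c_k)$ in terms of $q$-continuants provided by \eqref{q-cont}. By the lemma immediately preceding the proposition, the hypothesis that $x$ is a fixed point of $M(c_1,\ldots,c_k)$ together with Proposition \ref{action} gives that $[x]_q$ is a fixed point, for the M\"obius action, of the $q$-deformed matrix $M_q(c_1,\ldots,c_k)$. So there is nothing to do beyond translating this fixed point equation into a polynomial identity.

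First I would denote the entries of the $q$-deformed matrix by
\[
M_q(c_1,\ldots,c_k)=\begin{pmatrix}A&B\\ C&D\end{pmatrix},
\]
where by \eqref{q-cont}
\[
A=E_k(c_1,\ldots,c_k)_q,\quad B=-q^{c_k-1}E_{k-1}(c_1,\ldots,c_{k-1})_q,
\]
\[
C=E_{k-1}(c_2,\ldots,c_k)_q,\quad D=-q^{c_k-1}E_{k-2}(c_2,\ldots,c_{k-1})_q.
\]
The fixed point equation $M_q(c_1,\ldots,c_k)\cdot[x]_q=[x]_q$ reads $\frac{A[x]_q+B}{C[x]_q+D}=[x]_q$, which after clearing denominators becomes
\[
C\,[x]_q^{\,2}-(A-D)\,[x]_q-B=0.
\]

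Substituting back the expressions for $A,B,C,D$ yields exactly the quadratic $\A X^{2}-\B X+\cC=0$ with
\[
\A=C=E_{k-1}(c_2,\ldots,c_k)_q,
\]
\[
\B=A-D=E_k(c_1,\ldots,c_k)_q+q^{c_k-1}E_{k-2}(c_2,\ldots,c_{k-1})_q,
\]
\[
\cC=-B=q^{c_k-1}E_{k-1}(c_1,\ldots,c_{k-1})_q,
\]
which is the claim. There is no real obstacle; the only point requiring care is signs and the correct identification of the entries of $M_q(c_1,\ldots,c_k)$ with the relevant $q$-continuants, which is already packaged in \eqref{q-cont}.
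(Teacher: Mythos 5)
Your proof is correct and follows essentially the same route as the paper: the authors likewise write the fixed point condition $M_q(c_1,\ldots,c_k)\cdot[x]_q=[x]_q$ using the continuant description \eqref{q-cont} of the matrix entries and clear denominators to read off $\A$, $\B$, $\cC$. The sign bookkeeping in your identification $\A=C$, $\B=A-D$, $\cC=-B$ is accurate.
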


\begin{proof}
Expressing that $[x]_{q}$ is a fixed point of $M_{q}(c_1,\ldots,c_k)$ and using the description \eqref{q-cont} one gets
$$
[x]_{q}= \frac{[x]_{q}E_{k}(c_{1}, c_{2},\ldots, c_{k})_{q}-q^{c_{k}-1}E_{k-1}(c_{1}, c_{2},\ldots, c_{k-1})_{q}}{[x]_{q}E_{k-1}(c_{2}, c_{3},\ldots, c_{k})_{q}-q^{c_{k}-1}E_{k-2}(c_{2}, c_{3},\ldots, c_{k-1})_{q}},
$$
which leads to the result.
\end{proof}

 \begin{prop}\label{PRS}
If $x$ is a fixed point of the matrix $M(c_1,\ldots,c_k)$, then its $q$-deformation $[x]_{q}$  has the following form
\begin{equation*}
[x]_{q}=\dfrac{\Rc\pm\sqrt\Pc}{\Sc},
\end{equation*}
where $\Pc$, $\Rc$ and $\Sc$ are polynomials in $\Z[q, q^{-1}]$ given by $q$-continuants
\begin{eqnarray*}
\Pc&=& \Pc=(\Tr M_{q}({c_1,\ldots,c_k}))^{2}-4q^{\sum_{i=1}^{k}(c_{i}-1)}\\
\Rc&=& E_{k}({c_1,\ldots,c_k})_{q}+q^{c_{k}-1}E_{k-2}({c_2,\ldots,c_{k-1}})_{q}\\
\Sc&=&2E_{k-1}({c_2,\ldots,c_k})_{q}
\end{eqnarray*}
\end{prop}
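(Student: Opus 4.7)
The plan is to read off $[x]_{q}$ from the quadratic equation supplied by Proposition \ref{ABC} and then match the discriminant with the formula for $\Pc$.

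First, applying the quadratic formula to $\A X^{2}-\B X+\cC=0$ gives
\[
[x]_{q}=\frac{\B\pm\sqrt{\B^{2}-4\A\cC}}{2\A}.
\]
Comparing with Proposition \ref{ABC}, the denominator $2\A=2E_{k-1}(c_{2},\ldots,c_{k})_{q}$ is exactly the claimed $\Sc$, and $\B=E_{k}(c_{1},\ldots,c_{k})_{q}+q^{c_{k}-1}E_{k-2}(c_{2},\ldots,c_{k-1})_{q}$ is exactly the claimed $\Rc$. So only the identification of the discriminant remains.

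Next I would compute $\B^{2}-4\A\cC$ in terms of the trace. From the matrix form \eqref{q-cont}, $\Tr M_{q}(c_{1},\ldots,c_{k})=E_{k}(c_{1},\ldots,c_{k})_{q}-q^{c_{k}-1}E_{k-2}(c_{2},\ldots,c_{k-1})_{q}$. Writing $a:=E_{k}(c_{1},\ldots,c_{k})_{q}$ and $b:=q^{c_{k}-1}E_{k-2}(c_{2},\ldots,c_{k-1})_{q}$, one has $\B=a+b$ and $\Tr M_{q}=a-b$, hence the polarization identity gives
\[
\B^{2}-(\Tr M_{q})^{2}=(a+b)^{2}-(a-b)^{2}=4ab=4q^{c_{k}-1}E_{k}(c_{1},\ldots,c_{k})_{q}E_{k-2}(c_{2},\ldots,c_{k-1})_{q}.
\]
Therefore
\[
\B^{2}-4\A\cC=(\Tr M_{q})^{2}+4q^{c_{k}-1}\bigl[E_{k}(c_{1},\ldots,c_{k})_{q}E_{k-2}(c_{2},\ldots,c_{k-1})_{q}-E_{k-1}(c_{2},\ldots,c_{k})_{q}E_{k-1}(c_{1},\ldots,c_{k-1})_{q}\bigr].
\]

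The one non-trivial step (and the only real obstacle) is to identify the bracket as $-q^{\sum_{i=1}^{k-1}(c_{i}-1)}$, so that the full discriminant becomes $(\Tr M_{q})^{2}-4q^{\sum_{i=1}^{k}(c_{i}-1)}=\Pc$. This is precisely a determinantal identity for $q$-continuants, which I would prove by computing $\det M_{q}(c_{1},\ldots,c_{k})$ in two ways. On one hand, multiplicativity of the determinant applied to the factorisation \eqref{qNegMat} gives $\det M_{q}(c_{1},\ldots,c_{k})=\prod_{i=1}^{k}q^{c_{i}-1}=q^{\sum_{i=1}^{k}(c_{i}-1)}$. On the other hand, expanding the determinant directly from the expression \eqref{q-cont} yields $q^{c_{k}-1}\bigl[E_{k-1}(c_{1},\ldots,c_{k-1})_{q}E_{k-1}(c_{2},\ldots,c_{k})_{q}-E_{k}(c_{1},\ldots,c_{k})_{q}E_{k-2}(c_{2},\ldots,c_{k-1})_{q}\bigr]$. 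Cancelling the common factor $q^{c_{k}-1}$ delivers the required identity, and assembling the pieces completes the proof.
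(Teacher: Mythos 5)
Your proposal is correct and follows essentially the same route as the paper: solve the quadratic from Proposition \ref{ABC}, identify $\Rc=\B$ and $\Sc=2\A$, and rewrite the discriminant via the trace together with the continuant identity $E_{k-1}(c_2,\ldots,c_k)_q\,q^{c_k-1}E_{k-1}(c_1,\ldots,c_{k-1})_q-E_k(c_1,\ldots,c_k)_q\,q^{c_k-1}E_{k-2}(c_2,\ldots,c_{k-1})_q=q^{\sum_i(c_i-1)}$. The only (harmless) divergence is that you justify this identity by computing $\det M_q(c_1,\ldots,c_k)$ in two ways from \eqref{qNegMat} and \eqref{q-cont}, whereas the paper invokes the Desnanot--Jacobi identity on the determinant \eqref{KEq}; your derivation is an equally valid, self-contained alternative.
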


\begin{proof}
The expressions are obtained by solving the equation of Proposition \ref{ABC}. 
The expression of $\Pc$ may be simplified using 
$$\Tr M_{q}({c_1,\ldots,c_k})=E_{k}({c_1,\ldots,c_k})_{q}-q^{c_{k}-1}E_{k-2}({c_2,\ldots,c_{k-1}})_{q}$$ and using the following relation on the $q$-continuants
$$
E_{k-1}({c_2,\ldots,c_k})_{q} \cdot q^{c_{k}-1}E_{k-1}({c_1,\ldots,c_{k-1}})_{q} 
-E_{k}({c_1,\ldots,c_k})_{q}\cdot q^{c_{k}-1}E_{k-2}({c_2,\ldots,c_{k-1}})_{q}
=q^{\sum_{i=1}^{k}(c_{i}-1)}.
$$
which is given by Desnanot-Jacobi identity in \eqref{KEq}.
\end{proof}

\subsection{Palindromicity of $\Pc$ (end of proof of Theorem \ref{result1}) }\label{palP}
Let $x$ be a quadratic irrational. There exists a matrix $M\in \PSL(2, \Z)$ such that $M\cdot x =x$. 
One can write $M=M({c_1,\ldots,c_k})$ with $c_1,\ldots,c_k$ positive integers. We obtain $[x]_{q}$  as the fixed point of $M_{q}({c_1,\ldots,c_k})$. Since the coefficients $c_{i}$'s are positive, the entries of $M_{q}({c_1,\ldots,c_k})$ are polynomials in $q$ and by Proposition \ref{PRS} we obtain $[x]_{q}=\frac{\Rc\pm\sqrt\Pc}{\Sc},
$ with $$\Pc=(\Tr M_{q}({c_1,\ldots,c_k}))^{2}-4q^{\sum_{i=1}^{k}(c_{i}-1)}.$$
We know by Lemma \ref{trace} that the polynomial $\Tr M_{q}({c_1,\ldots,c_k})$ is a palindrome. Moreover its degree is the same as $\deg E_{k}(c_1,\ldots,c_k)_{q}=\sum_{i=1}^{k}(c_{i}-1)$.
Hence, $(\Tr M_{q}({c_1,\ldots,c_k}))^{2}$ is a palindromic polynomial of even degree  with median coefficient attached to $q^{\sum_{i=1}^{k}(c_{i}-1)}$. Thus $\Pc$ is still a palindrome.

\subsection{Examples}\label{exquad} We study examples of quadratic irrationals with short periodic continued fraction expansions.

\begin{ex}
Let us consider the quadratic irrational with purely periodic expansion with sign $-$ of period 1:
$$x=\llbracket \bar c\rrbracket_{}=c - \cfrac{1}{c 
          - \cfrac{1}{c_{} -
           \cfrac{1}{\ddots} }}=\dfrac{c+\sqrt{c^{2}-4}}{2}, \qquad c\geq 3.$$
The $q$-deformation leads to
$$
[x]_{q}=\dfrac{[c]_{q}+\sqrt{[c]_{q}^{2}-4q^{c-1}}}{2}.
$$
For $c=3$ one obtains
$$
\left[\frac{3+\sqrt5}{2}\right]_{q}=\frac{1+q+q^{2}+\sqrt{1+2q-q^{2}+2q^{3}+q^{4}}}{2}=\frac{1+q+q^{2}+\sqrt{(1-q+q^{2})(1+3q^{}+q^{2})}}{2}
$$
Starting from $c\geq 4$ the polynomial $[c]_{q}^{2}-4q^{c-1}$ under the radical has positive coefficients.
\end{ex}

\begin{ex}
Let us consider the quadratic irrational with purely periodic expansion with sign $+$ of period 1:
$$
x=[\overline{a}]= a+\cfrac{1}{a+{\cfrac{1}{a+\cfrac{1}{\ddots}}}}=\cfrac{a+\sqrt{a^2+4}}{2}, \qquad a\geq 1.$$
Writing $[x]_{q}=[a,\overline{a}]_{q}=[a]_{q}+\cfrac{q^a}{[a]_{q^-1}+\cfrac{q^{-a}}{[x]_{q}}}$ one gets
\begin{eqnarray*}
[x]_{q}&=&\dfrac{q[a]_{q}+(q^{a}+1)(q-1)+\sqrt{\big(q[a]_{q}+(q^{a}+1)(q-1)\big)^{2}+4q}}{2q}\\[6pt]
&=&\dfrac{q[a]_{q}+(q^{a}+1)(q-1)+\sqrt{(1-q+q^{2})([a+1]_{q}^{2}-q[2a-1]_{q}+2q^{a})}}{2q}.
\end{eqnarray*}
For $a=1,2,3,4$, we obtain respectively
\begin{eqnarray*}
\left[\frac{1+\sqrt5}{2}\right]_{q}&=& \cfrac {q^{2}+q-1+ \sqrt{(1-q+q^{2})(1+3q^{}+q^{2})}}{2q}\\[6pt]
[1+\sqrt{2}]_{q} &=& \cfrac {q^{3}+2q-1+ \sqrt{(1-q+q^{2})(1+q+4q^{2}+q^{3}+q^{4})}}{2q}\\[6pt]
\left[\cfrac{3+\sqrt{13}}{2}\right]_{q} &=& \cfrac {q^{4}+q^{2}+2q-1+ \sqrt{(1-q+q^{2})(1+q+2q^{2}+5q^{3}+2q^{4}+q^{5}+q^{6})}}{2q}\\[6pt]
[2+\sqrt{5}]_{q} &=& \cfrac {q^{5}+q^{3}+q^{2}+2q-1+ \sqrt{(1-q+q^{2})(1+q+2q^{2}+3q^{3}+6q^{4}+3q^{5}+2q^{6}+q^{7}+q^{8})}}{2q}.
\end{eqnarray*}
The polynomial under the radical factors out as follows
$$
(1-q+q^{2})(q^{2a} + q^{2a-1} + 2q^{2a-2} + \ldots + (a-1)q^{a+1} + (a+2)q^{a} + (a-1)q^{a-1} + \ldots + 2q^{2} + q + 1)$$
and starting from $a\geq 4$ it has positive coefficients.
\end{ex}

\begin{ex}
Let us consider the quadratic irrational with purely periodic expansion with sign $+$ of period 2:
$$x=[\overline{a,b}]=a+\cfrac{1}{b+{\cfrac{1}{a+\cfrac{1}{\ddots}}}}=\cfrac{ab+\sqrt{(ab)^{2}+4ab}}{2b}, \qquad a,b\geq 1.$$ 
Writing $[x]_{q}=[a,\overline{b}]_{q}=[a]_{q}+\cfrac{q^a}{[b]_{q^-1}+\cfrac{q^{-b}}{[x]_{q}}}$ one gets
\begin{eqnarray*}
[x]_{q}&=&\cfrac{q[a]_{q}[b]_{b} + q^{a+b} -1+\sqrt{\big(q[a]_{q}[b]_{b} + q^{a+b} -1\big)^{2}+4q[a]_{q}[b]_{q}}}{2q[b]_{q}}
\end{eqnarray*}
We list below the polynomials under the radical obtained for various values of $a$ and $b$:

$\bullet$ \ $a=1, b=2 \left( x=\cfrac{1+\sqrt{3}}{2} \right)$  : $q^6+2q^5+3q^4+3q^2+2q+1$ \\

$\bullet$ \ $a=1,b=3  \left( x=\cfrac{3+\sqrt{21}}{6} \right)$ : $q^8+2q^7+3q^6+4q^5+q^4+3q^2+2q+1 = (q^4+q^3+3q^2+q+1)(q^4+q^3-q^2+q+1)$ \\

$\bullet$ \ $a=1,b=4  \left( x=\cfrac{1+\sqrt{2}}{2} \right)$ : $q^{10}+2q^9+3q^8+4q^7+5q^6+2q^5+5q^4++4q^3+3q^2+2q+1$ \\

$\bullet$ \ $a=1,b=5  \left( x=\cfrac{5+3\sqrt{5}}{10} \right) $ : $q^{12}+2q^{11}+3q^{10}+4q^9++5q^8+6q^7+3q^6+6q^5+5q^4+4q^3+3q^2+2q+1 = (q^6+q^5+q^4+3q^3+q^2+q+1)(q^6+q^5+q^4-q^3+q^2+q+1)$ \\

$\bullet$ \ $a=2 , b=1  \left( x=1+\sqrt{3} \right)$ : $ q^6 + 2q^5 + 3q^4 + 3q^2 + 2q + 1$ \\

$\bullet$ \ $a=2 , b=3  \left( x=\cfrac{3+\sqrt{15}}{3} \right)$ : $ q^{10} + 2q^9 + 5q^8 + 8q^7 + 10q^6 + 8q^5 + 10q^4 + 8q^3 + 5q^2 + 2q + 1$ \\

$\bullet$ \ $a=2 , b=4  \left( x=\cfrac{2+\sqrt{6}}{2} \right)$ : $ q^{10} + 4q^8 + 8q^6 - 2q^5 + 8q^4 + 4q^2 + 1=(q^4 - q^3 + 3q^2 - q + 1)(q^6 + q^5 + 2q^4 + 2q^2 + q + 1)$ \\

$\bullet$ \ $a=2 , b=5  \left( x=\cfrac{5+\sqrt{35}}{5} \right)$ : $ q^{14} + 2q^{13} + 5q^{12} + 8q^{11} + 12q^{10} + 16q^9 + 18q^8 + 16q^7 + 18q^6 + 16q^5 + 12q^4 + 8q^3 + 5q^2 + 2q + 1$\\

$\bullet$ \ $a=3 , b=1  \left( x=\cfrac{3+\sqrt{21}}{2} \right)$ : $ q^8 + 2q^7 + 3q^6 + 4q^5 + q^4 + 4q^3 + 3q^2 + 2q + 1 = (q^4 + q^3 - q^2 + q + 1)(q^4 + q^3 + 3q^2 + q + 1)$ \\

$\bullet$ \ $a=3 , b=2  \left( x=\cfrac{3+\sqrt{15}}{2} \right)$ : $q^{10} + 2q^9 + 5q^8 + 8q^7 + 10q^6 + 8q^5 + 10q^4 + 8q^3 + 5q^2 + 2q + 1 $ \\

$\bullet$ \ $a=3 , b=4  \left( x=\cfrac{3+2\sqrt{3}}{2} \right)$ : $q^{14} + 2q^{13} + 5q^{12} + 10q^{11} + 16q^{10} + 22q^9 + 27q^8 + 26q^7 + 27q^6 + 22q^5 + 16q^4 + 10q^3 + 5q^2 + 2q + 1$ \\

$\bullet$ \ $a=3 , b=5  \left( x=\cfrac{15+\sqrt{285}}{10} \right)$ : $q^{16} + 2q^{15} + 5q^{14} + 10q^{13} + 16q^{12} + 24q^{11} + 31q^{10} + 36q^9 + 35q^8 + 36q^7 + 31q^6 + 24q^5 + 16q^4 + 10q^3 + 5q^2 + 2q + 1= (q^8 + q^7 + 2q^6 + 3q^5 + q^4 + 3q^3 + 2q^2 + q + 1)(q^8 + q^7 + 2q^6 + 3q^5 + 5q^4 + 3q^3 + 2q^2 + q + 1)$ \\
\end{ex}

\begin{ex} We list below the polynomials under the radical obtained in the $q$-deformation of $\sqrt{n}$ for the first values of 
$n$:

$ [\sqrt{2}]_{q} : q^6 + 4q^4 - 2q^3 + 4q^2 + 1=(q^2 - q + 1)(q^4 + q^3 + 4q^2 + q + 1)$ \\

$ [\sqrt{3}]_{q} : q^6 + 2q^5 + 3q^4 + 3q^2 + 2q + 1 $ \\

$ [\sqrt{5}]_{q} : q^{10} + 2q^8 + 2q^7 + 5q^6 + 5q^4 + 2q^3 + 2q^2 + 1=  (q^2 - q + 1)(q^8 + q^7 + 2q^6 + 3q^5 + 6q^4 + 3q^3 + 2q^2 + q + 1) $ \\

$ [\sqrt{6}]_{q} :q^{10} + 4q^8 + 8q^6 - 2q^5 + 8q^4 + 4q^2 + 1= (q^4 - q^3 + 3q^2 - q + 1)(q^6 + q^5 + 2q^4 + 2q^2 + q + 1) $ \\

$ [\sqrt{7}]_{q} : q^{10} + 2q^9 + q^8 + 4q^7 + 6q^6 + 6q^4 + 4q^3 + q^2 + 2q + 1, $ \\

$ [\sqrt{8}]_{q} : q^{10} + 2q^9 + 3q^8 + 4q^7 + 5q^6 + 2q^5 + 5q^4 + 4q^3 + 3q^2 + 2q + 1 $ \\

$ [\sqrt{10}]_{q} : q^{14} + 2q^{12} + 2q^{11} + 3q^{10} + 4q^9 + 7q^8 + *q^7 + 7q^6 + 4q^5 + 3q^4 + 2q^3 + 2q^2 + 1=(q^2 - q + 1)(q^{12} + q^{11} + 2q^{10} + 3q^9 + 4q^8 + 5q^7 + 8q^6 + 5q^5 + 4q^4 + 3q^3 + 2q^2 + q + 1) $ \\

$ [\sqrt{11}]_{q} : q^{14} + 2q^{12} + 4q^{11} + q^{10} + 6q^9 + 8q^8 + 8q^6 + 6q^5 + q^4 + 4q^3 + 2q^2 + 1 $ \\
\end{ex}

On all examples we compute we observe that the polynomial $\Pc$ of Theorem \ref{genexp} always factors out with a polynomial which has positive coefficients.\\

\textbf{Acknowledgement:} We are deeply grateful to Valentin Ovsienko for many valuable comments since the beginning of the project. We are also grateful to  Alexander Veselov for enlightening discussions. Significant progress on this work was made during a research in pairs stay at MFO. We thank MFO for the warm hospitality and for providing us with outstanding working conditions.
\bibliographystyle{alpha}
\bibliography{BiblioMoy3,qAnalog}
\end{document}